\numberwithin{equation}{section}
\newtheorem{theorem}{\bf Theorem}[section]
\newtheorem{definition}[theorem]{\bf Definition}
\newtheorem{corollary}[theorem]{\bf Corollary}
\newtheorem{remark}[theorem]{\bf Remark}
\newtheorem{lemma}[theorem]{\bf Lemma}
\newtheorem{example}{\bf Example}
\def\real{\mathop{\mathrm{Re}}}
\def\imag{\mathop{\mathrm{Im}}}
\newcommand {\mat}  [1] {\left[\begin{array}{#1}}
	\newcommand {\rix}      {\end{array}\right]}
\newcommand{\nrm}[1]{{\left\vert\kern-0.25ex\left\vert\kern-0.25ex\left\vert #1 
		\right\vert\kern-0.25ex\right\vert\kern-0.25ex\right\vert}}
\newcommand{\C}{{\mathbb C}}
\newcommand{\R}{{\mathbb R}}
\renewcommand{\S}{{\mathbb S}}
\journal{Linear Algebra and its Applications}
\begin{document}
	
\begin{frontmatter}
	
	
	\title{On structured condition number of rational matrix functions}
	
	\author{Ritwik Prabin Kalita}\fnref{iitd}
	\author{Anshul Prajapati\fnref{MPI}}
	\author{Punit Sharma\fnref{iitd}}
	\fntext[iitd]{Department of Mathematics, Indian Institute of Technology Delhi, Hauz Khas, 110016, India; \texttt{\{maz238697, punit.sharma\}@maths.iitd.ac.in.},\texttt{ punit1718@iitdabudhabi.ac.ae.}
		PS acknowledges the support of the SERB - CRG grant (CRG/2023/003221) and SERB-MATRICS grant by Government of India.
	}
	\fntext[MPI]{Max Planck Institute for Dynamics of Complex Technical Systems, 39106, Magdeburg, Germany;\texttt{ prajapati@mpi-magdeburg.mpg.de.} A.P. acknowledges the MPI for support through a Postdoctoral Fellowship.}
	
	\begin{abstract}
		We derive the necessary and sufficient conditions for the simple eigenvalues of rational matrix functions with symmetry structure to have the same normwise condition number with respect to arbitrary and structure-preserving perturbations. We obtain an exact expression for the structured condition number of simple eigenvalues of symmetric, skew-symmetric and T-even/odd rational matrix functions, and tight bounds are obtained for simple eigenvalues of Hermitian, skew-Hermitian, $*$-even/odd, $*$-palindromic and T-palindromic rational matrix functions.
	\end{abstract}
	
	\begin{keyword}
		matrix polynomial, rational matrix function, condition number, structured condition number.\\
		{\textbf{AMS subject classification.}}
		15A18, 15A22, 65K05
	\end{keyword}
	
\end{frontmatter}


\section{Introduction}

In this paper, we study the conditioning of simple eigenvalues of the rational eigenvalue problem (REP),
\begin{equation}
	G(z)x=0,
\end{equation}
where $G(z)$ is an $n\times n$ rational matrix function (RMF) of the form
\begin{equation}\label{mat:G}
	G(z)=\sum_{k=0}^d z^kA_k + \sum_{j=1}^m w_j(z)E_j,
\end{equation}
where $A_k$'s and $E_j$'s are from $\C^{n,n}$ that carry some symmetry structure, and $w_j(z):=\frac{s_j(z)}{q_j(z)}$, where $s_j(z)$ and $q_j(z)$ are scalar polynomials for $j=1,\ldots,m$. A scalar $\lambda\in \C$ is said to be an \emph{eigenvalue} of $G(z)$ if $G(\lambda)x=0$ or $y^*G(\lambda)=0$ for some non-zero vectors $x$ and $y$. The non-zero vectors $x$ and $y$ are called \emph{right} and \emph{left eigenvectors} corresponding to the eigenvalue $\lambda$, respectively.
When the rational part is zero in~\eqref{mat:G} (i.e., $E_j=0$ for each $j$), then it is called a matrix polynomial and we denote it by $P(z)$, i.e., $P(z)=\sum_{k=0}^d z^kA_k$. 
This paper considers the problem of computing the structured condition number of  a simple eigenvalue $\lambda$ of RMF's $G(z)$ with structures from Table~\ref{tab:my_label}.

The REPs arise in a wide range of applications, such as in acoustic emissions of high-speed trains, loaded elastic strings, damped vibration of structures, electronic structure calculations of quantum dots, and in control theory; see~\cite{BetHMST13,MR2124762,MR2249422,MR2054343,MR2260626}. The REP with $G(z)$ of the form~\eqref{mat:G} are special cases of general nonlinear eigenvalue problems of the form
\begin{equation}
	M(z)x=0, \quad \text{ with } \quad M(z)=\sum_{j=1}^m M_j f_j(z),
\end{equation}
where $M_j$'s are $n\times k$ matrices and $f_1(z),f_2(z),\ldots, f_m(z)$ are scalar-valued functions; see~\cite{MR2124762,TisM01} for a survey on a large number of applications and~\cite{BetHMST13} for some benchmark examples. In most cases, the RMFs arising from applications follows some symmetry structure on the coefficient matrices. For instance, see the following examples.
\begin{example}{\label{examp:1}}{\rm \cite{BChoHHL11}} {\rm 
		The numerical solution of a fluid-structure interaction leads to a symmetric REP of the form 
		\[
		\left( \frac{z^2}{a^2} M + K + \frac{z^2}{z \beta + \alpha} D 
		\right)x=0,
		\]
		where $a$ is the speed of sound in the given material, $\alpha$ and $\beta$ are positive constants, and $M$ and $K$ are symmetric positive definite matrices, respectively describing the mass and the stiffness, and the matrix $D$ is symmetric positive semidefinite describing the damping effects of an absorbing wall. 
	}
\end{example} 

\begin{example}{\label{examp:2}}{\rm 
		The REP arising in the finite element simulation of mechanical problems, see~\cite{Sol06,Vos03}, have the following form:
		\[
		\left ( P(z) +Q (z) \sum_{j=1}^m \frac{z}{z-\sigma_j}E_j
		\right )x=0,
		\]
		where $P(z)$ and $Q(z)$ are symmetric matrix polynomials with large and sparse coefficients, and $E_j$'s are low-rank matrices. For a classical example, the REP
		\[
		\left(A-zB + \sum_{j=1}^m\frac{z}{z-\sigma_j}E_j\right)x=0,
		\]
		where $\sigma_j$ are positive, $A$ and $B$ are symmetric positive definite matrices of size $n\times n$ and $E_j=C_jC_j^T$ with $C_j \in \mathbb{R}^{n\times r_j}$ are of rank $r_j$, arises in the study of the simulation of mechanical vibration of fluid-solid structures~\cite{MR2249422,MR2124762}.
	}
\end{example}

The structures we consider on the weights $w_j$ and the coefficient matrices $A_k$ and $E_j$ are identical to the ones in~\cite{PraS24} and listed in Table~\ref{tab:my_label}. Note that the symmetry structures on $A_k$'s and $E_j$'s and the algebraic structures on $w_j$'s imply a symmetry structure on the RMF. The symmetry structures on the cofficient matrices of $G(z)$ force a symmetry structure on their eigen spectrum. 

As mentioned in~\cite{MR2249422,HwaLW05}, in some cases, the REPs can be transformed into a higher degree polynomial eigenvalue problem by clearing out the denominators, and the polynomial eigenvalue problem can then be linearized into a  linear eigenvalue problem. However, the size of the problem may substantially increase, and extra un-physical eigenvalues are typically introduced. These have to be recognized and removed from the computed spectrum~\cite[Example 1.3]{PraS24}.  This motivates a careful perturbation analysis of the original data that avoids converting the rational eigenvalue problem into a polynomial eigenvalue problem.

%
%

\begin{table}[h!]
	\centering
	\begin{tabular}{|c|c|c|}\hline 
		\textbf{Structure} &  \textbf{structure on $A_k$} & \textbf{structure on $E_j$}\\ 
		\textbf{on $G(z)$}    & \textbf{$k=0,\ldots,d$} & \textbf{$j=1,\ldots,m$} \\ \hline
		symmetric  &  $A_k^T=A_k$ & $E_j^T=E_j$  \\ \hline 
		skew-symmetric & $A_k^T=-A_k$ & $E_j^T=-E_j$ \\ \hline
		
		T-even  & $A_k^T=(-1)^kA_k$ &  
		\begin{tabular}{c c}
			$E_j^T=-E_j$ ~ if $w_j(-z)=-w_j(z)$\\ $E_j^T=E_j$ ~ if $w_j(-z)=w_j(z)$   
		\end{tabular}\\ \hline
		
		T-odd & $A_k^T=(-1)^{k+1}A_k$ & 
		\begin{tabular}{c c}
			$E_j^T=-E_j$ ~if $w_j(-z)=w_j(z)$\\ $E_j^T=E_j$ ~ if $w_j(-z)=-w_j(z)$  \end{tabular}\\ \hline
		
		Hermitian & $A_k^*=A_k$ & $E_j^*=E_j$ ~ if $w_j(z)^*=w_j(\overline{z})$ \\ \hline
		skew-Hermitian & $A_k^*=-A_k$ & $E_j^*=-E_j$ ~ if $w_j(z)^*=w_j(\overline{z})$ \\ \hline
		$*$-even & $A_k^*=(-1)^pA_k$ & \begin{tabular}{c c}
			$E_j^*=E_j$ ~ if $(w_j(-z))^*=w_j(\overline{z})$\\ 
			$E_j^*=-E_j$ ~ if $(w_j(-z))^*=-w_j(\overline{z})$  
		\end{tabular} \\ \hline 
		
		$*$-odd  & $A_k^*=(-1)^{k+1}A_k$ & \begin{tabular}{c c}
			$E_j^*=E_j$ ~ if $(w_j(-z))^*=-w_j(\overline{z})$\\ 
			$E_j^*=-E_j$ ~ if $(w_j(-z))^*=w_j(\overline{z})$  
		\end{tabular} \\ \hline 
		
		$*$-palindromic & $A_k^*=A_{d-k}$ & $E_j^*=E_j$ \quad if $(w_j(z))^*={\overline z}^d w_j(\frac{1}{\overline{z}})$\\ \hline
		T-palindromic & $A_k^T=A_{d-k}$ & $E_j^T=E_j$ \quad if $w_j(z)=z^dw_j(\frac{1}{z})$\\ \hline
	\end{tabular}
	\caption{Structures on G(z)}
	\label{tab:my_label}
\end{table}


Condition numbers play an important role in the sensitivity analysis of problems that compute eigen-elements of a matrix function. If the matrix function with additional symmetry structures are considered, then in that case, it is advisable to do the sensitivity analysis under structure-preserving perturbations. Eigenvalue condition number together with eigenvalue backward error bound the error in eigenvalue computation. The 
eigenvalue backward errors have been well studied for matrix polynomials with various structures~\cite{MR3194659,MR3335496,MR4404572,Tis00}. Recently, in~\cite{PraS24}, authors have studied the structured eigenvalue backward error of rational matrix functions $G(z)$ of the form~\eqref{mat:G}  with structures from Table~\ref{tab:my_label}.
The condition number for an unstructured matrix polynomial $P(z)= A_0+zA_1+\cdots z^dA_d$ was discussed in~\cite{Tis00}, which was later extended to various symmetry structures on $P(z)$ in~\cite{Bor10}. 
 In~\cite{NofNPQ23}, the condition number of \emph{unstructured} (with no symmetry structures) RMFs in the transfer function form was considered and computable formulas were obtained. However, no work has been done on the structured condition number of RMFs with symmetry structures. 
 
 Our work is mainly inspired by~\cite{Bor10} and aims at deriving computable formulas for the structured condition number of RMF's with structures from Table~\ref{tab:my_label}. We derive necessary and sufficient conditions under which the unstructured and the structured condition numbers of simple eigenvalues of RMF $G(z)$~\eqref{mat:G} with  structures from Table~\ref{tab:my_label}, are equal. Our results are generalization of the techniques used in~\cite{Bor10} for matrix polynomials with symmetry structures such as  T-palindromic, $*$-palindromic, T-alternating and $*$-alternating. We note that, this generalization is not immediate and requires a careful dealing of the weights $w_j(z):=\frac{s_j(z)}{q_j(z)}$ while perturbing the stuctured RMF~\eqref{mat:G}. As far as we know, this is the first attempt to compute structured eigenvalue condition number of RMFs.

This paper is organized as follows: In Section~\ref{sec:prelims}, we make preliminary definitions and review some results that will be used in later sections.
Inspired by~\cite{Bor10}, in Section~\ref{sec:scond}, we consider structured condition number of a simple eigenvalue of RMF's with structures from Table~\ref{tab:my_label}. All the results pertaining to the structured condition number are contained in this section. 

 
 \section{Preliminaries}\label{sec:prelims}

In the following, ${\rm Herm}(n)$, ${\rm SHerm}(n)$,  ${\rm Sym}(n)$, and ${\rm Ssym}(n)$ respectively denote the sets of $n \times n$ Hermitian, skew-Hermitian, symmetric, and skew-symmetric matrices. 
By $\|\cdot\|$, we denote the spectral norm or 2-norm of a vector or a matrix. By $I_m$, we denote the identity matrix of size $m \times m$, and $i$ stands for the imaginary unit of the complex numbers, i.e.,  { $i^2=-1$}. For a complex number $z$, we define  $\text{sign}(z)=\frac{\overline{z}}{|z|}$ when $z \neq 0$ and $\text{sign}(0)=1$.

Let $G(z)$ be an $n\times n$ RMF of the form~\eqref{mat:G}. Then $G(z)$ is said to be \emph{regular} if $\text{det}(G(z))\not\equiv 0$; otherwise, it is called \emph{singular}. The triplet $(\lambda,x,y)$ with $x,y\in \C^n\setminus\{0\}$ is referred to as an \emph{eigentriplet} of $G(z)$ if $G(\lambda)x=0$ and $y^*G(\lambda)=0$. The roots of the function $q_j(z)$ are the \emph{poles} of the RMF $G(z)$, and $G(z)$ is not defined at these values. Throughout the paper, we consider $x$ and $y$ to be unit norm eigenvectors of $G(z)$ corresponding to the eigenvalue $\lambda$ and the following assumption on $\lambda$.
\begin{align}\label{assump}
	\text{\textbf{Assumption}}&:~\lambda \in \C~ \text{ be such\,that}~ \lambda~\text{is a simple eigenvalue of } G(z),\, w_j(\lambda)\neq 0~\text{for}\, j=1,\ldots,m \nonumber \\ &\hspace{0.3cm}~\text{and}~ \lambda~ \text{is not a pole of}~ G(z).
\end{align}
Motivated by the perturbation analysis in~\cite{PraS24}, we consider the structure-preserving perturbations $\Delta G(z)$ to the RMF $G(z)$ of the form
\begin{equation}\label{mat:delG}
	\Delta G(z)=\sum_{k=0}^d z^k\Delta{A_k}+\sum_{j=1}^mw_j(z)\Delta_{E_j}.
\end{equation}
In order to measure the sensitivity of $\lambda$, we first need to fix a norm to measure the perturbations in RMF $\Delta G(z)$. Given a matrix norm $\|\cdot\|_M$ on $\C^{n,n}$, and a vector norm $\|\cdot\|_v$ on $\C^{d+m+1}$, 
we define 
\begin{equation}
	\|\Delta G(z)\|_{M,v} = 
	{\big \|{\left({\|\Delta_{A_0}\|}_M,\ldots,{\|\Delta_{A_d}\|}_M,{\|\Delta_{E_1}\|}_M,\ldots,{\|\Delta_{E_m}\|}_M \right)\big \|}_v}.
\end{equation}
We use the norm ${\|\Delta G(z)\|}_{2,\infty}$ to measure the perturbations in $G(z)$. Thus, the norm of the perturbation $\Delta G(z)$ of the form~\eqref{mat:delG}, is given by
\begin{equation*}
	\|\Delta G(z)\|_{2,\infty} = \max \{\|\Delta_{A_0}\|,\ldots,\|\Delta_{A_d}\|,\|\Delta_{E_1}\|,\ldots,\|\Delta_{E_m}\|\}.
\end{equation*}

Note that, for computing the condition number of simple eigenvalues of matrix polynomial $P(z)$, various norms have been used in literature to measure the perturbations in $P(z)$. For example, in~\cite{Tis00,Bor10}, the norm ${\|\cdot\|}_{2,\infty}$ was used, while in~\cite{AdhAK11}, the $\|\cdot\|_{2,2}$ and $\|\cdot\|_{F,2}$ were used in order to measure the perturbations in $P(z)$, where $F$ denotes the Frobenius norm on the set of $n\times n$ matrices.

\begin{remark}{\rm
		We note that a remark similar to~\cite[Remark 2.1]{PraS24} also holds on the assumption that $w_{j}(\lambda)\neq 0$ for $j=0,\ldots,m$. In fact, the general theory does not necessarily require this assumption on the weights $w_{j}$'s. However, for the sake of simplicity and uniform notation, we assume that $w_{j}(\lambda)\neq 0$ for $j=0,\ldots,m$.
	}
\end{remark}

We are now in a position to define the absolute condition number of a simple eigenvalue of $G(z)$~\eqref{mat:G} with respect to the norm ${\|\cdot\|}_{2,\infty}$. 

\begin{definition}
	Given a RMF $G(z)$ of the form~\eqref{mat:G}, and a simple eigenvalue $\lambda\in \C$, let $x$ be a right eigenvector of unit norm and $y$ be a corresponding left eigenvector of unit norm so that $G(\lambda)x=0$ and $y^*G(\lambda)=0$. Further, let 
	$\mathbb S \subseteq {(\C^{n,n})}^{d+m+1}$. Then 
\begin{align}\label{def:ucond}
	\kappa^{\mathbb S}(\lambda,G):&=\lim_{\epsilon\to 0} \sup \bigg\{ \frac{|\Delta \lambda|}{\epsilon} : (G(\lambda +\Delta \lambda) + \Delta G(\lambda +\Delta \lambda))(x+\Delta x) = 0, \nonumber\\ 
	&\hspace{1.5 cm} (\Delta_{A_0},\ldots,\Delta_{A_d},\Delta_{E_1},\ldots,\Delta_{E_m}) \in \mathbb S, \,\Delta G(z)=\sum_{k=0}^d z^k\Delta_{A_k}+\sum_{j=1}^mw_j(z)\Delta_{E_j},\nonumber \\ 
	&\hspace{2cm}  \|\Delta_{A_k}\|\leq \epsilon,~\|\Delta_{E_j}\|\leq \epsilon~~k=0,\ldots,d,~j=1,\ldots,m  \bigg\}
\end{align}
is called the \emph{structured condition number} of $\lambda$ with respect to $G(\lambda)$ and $\mathbb S$.
\end{definition}
When $\mathbb S= {(\C^{n,n})}^{d+m+1}$ in~\eqref{def:ucond}, then it is called the \emph{unstructured condition number} and is denoted by $\kappa(\lambda,G)$. In case $\lambda=\infty$, $\kappa^{\mathbb S}(\infty,G)$ is obtained by replacing $\frac{|\Delta \lambda|}{\epsilon}$ by $\frac{1}{\epsilon|\Delta \lambda|}$ in~\eqref{def:ucond}.

Expanding the constraint $(G(\lambda +\Delta \lambda) + \Delta G(\lambda +\Delta \lambda))(x+\Delta x) = 0$ upto first-order terms and premultiplying by $y^*$, we have
\begin{equation}\label{eq:dellambda}
	\Delta \lambda = - \frac{y^*\Delta G(\lambda) x}{y^*G'(\lambda)x} + \mathcal O(\epsilon^2).
\end{equation}
Note that, $y^*G'(\lambda)x \neq 0$, as $\lambda$ is a simple eigenvalue of $G(z)$~\cite[Theorem 3.2]{AndCL93}. Thus $\kappa^{\mathbb S}(\lambda,G)$ becomes 
\begin{align}\label{eq:scond1}
	\kappa^{\mathbb S}(\lambda,G):&=\lim_{\epsilon\to 0} \sup \bigg\{\frac{|y^*\Delta G(\lambda)x|}{\epsilon |y^*G'(\lambda)x|} : (\Delta_{A_0},\ldots,\Delta_{A_d},\Delta_{E_1},\ldots,\Delta_{E_m}) \in \mathbb S, \nonumber\\ 
	&\hspace{1.5 cm}  \,\Delta G(z)=\sum_{k=0}^d z^k\Delta_{A_k}+\sum_{j=1}^mw_j(z)\Delta_{E_j},~ \|\Delta_{A_k}\|\leq \epsilon,~\|\Delta_{E_j}\|\leq \epsilon  \bigg\}.
\end{align}
For the matrix polynomial case, that is, when $G(z)=P(z)=A_0+zA_1+\cdots+z^dA_d$, the unstructured condition number of a simple eigenvalue was considered in~\cite{Tis00} and the following expression was obtained:
\begin{equation}
	\kappa(\lambda,P)= \frac{\sum_{k=0}^d |\lambda|^k}{|y^*P'(\lambda)x|}.
\end{equation}
A similar expression can be obtained for the condition number $\kappa(\lambda,G)$ for a RMF of the form~\eqref{mat:G}, defined in~\eqref{def:ucond}. To see this, from~\eqref{eq:scond1}, we have
\begin{align}\label{eq:unst2}
	\kappa(\lambda,G)
	&\leq \lim_{\epsilon\to 0} \sup \bigg\{ \frac{\left(\sum_{k=0}^d |\lambda|^k \|\Delta_{A_k}\| + \sum_{j=1}^m |w_j(\lambda)| \|\Delta_{E_j}\|\right)\|x\|\|y\|}{\epsilon |y^*G'(\lambda)x|} : \|\Delta_{A_k}\|\leq \epsilon, \|\Delta_{E_j}\|\leq \epsilon 
	\bigg\}\nonumber \\
	&\leq \frac{\alpha }{|y^*G'(\lambda)x|},
\end{align}
where $\alpha = \sum_{k=0}^d |\lambda|^k + \sum_{j=1}^m |w_j(\lambda)|$, and $x$ and $y$ are unit norm vectors. This gives an upper bound to $\kappa(\lambda,G)$, which is attained by perturbations, $\Delta_{A_k} = -\epsilon \text{sign}(\lambda^k)M$ for $k=0,\ldots, d$ and $\Delta_{E_j} = -\epsilon \text{sign}(w_j(\lambda)) M$ for $j=1,2,\ldots, m$ with $M=yx^*$. Note that $\|M\|=1$ and $y^*Mx = 1$. Also,
\begin{align*}
	y^*\Delta G(\lambda)x &= \sum_{k=0}^d \lambda^k (-\epsilon \text{sign}(\lambda^k)) + \sum_{j=1}^m w_j(\lambda)(-\epsilon \text{sign}(w_j(\lambda)))= -\epsilon \alpha ,
\end{align*}
which gives $|y^*\Delta G(\lambda)x| = \epsilon \alpha $, implying the equality in~\eqref{eq:unst2}. In summary, we have the following result. 

\begin{theorem}\label{thm:unstrcond}
	Let $G(z)$ be a RMF of the form~\eqref{mat:G} and let $(\lambda,x,y)$ be an eigentriplet of $G(z)$ such that $\lambda$ satisfy the assumption~\eqref{assump}, and $\|x\|=1$ and $\|y\|=1$. Then
	\begin{equation*}
		\kappa(\lambda,G) = \frac{\alpha }{|y^*G'(\lambda)x|},
	\end{equation*}
	where $\alpha = \sum_{k=0}^d |\lambda|^k + \sum_{j=1}^m |w_j(\lambda)|$. 
\end{theorem}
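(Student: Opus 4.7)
My plan is to prove the formula by establishing matching upper and lower bounds on the supremum in~\eqref{eq:scond1} with $\mathbb S = (\C^{n,n})^{d+m+1}$. Starting from the first-order expansion~\eqref{eq:dellambda}, the task reduces to maximizing $|y^*\Delta G(\lambda) x|$ over the admissible perturbations; since $\lambda$ is simple, $y^*G'(\lambda)x \neq 0$ and dividing by it is harmless.

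First I would derive the upper bound. Expanding
\[
y^*\Delta G(\lambda) x = \sum_{k=0}^d \lambda^k\, y^*\Delta_{A_k} x + \sum_{j=1}^m w_j(\lambda)\, y^* \Delta_{E_j} x,
\]
and using $|y^*\Delta_{A_k} x| \le \|y\|\,\|\Delta_{A_k}\|\,\|x\|\le \epsilon$ and similarly $|y^*\Delta_{E_j}x| \le \epsilon$ (as $x,y$ are unit vectors and $\|\Delta_{A_k}\|,\|\Delta_{E_j}\|\le \epsilon$), the triangle inequality yields $|y^*\Delta G(\lambda) x| \le \epsilon\,\alpha$. Dividing by $\epsilon|y^*G'(\lambda) x|$ and letting $\epsilon \to 0$ gives $\kappa(\lambda,G) \le \alpha/|y^*G'(\lambda)x|$.

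Next I would exhibit a perturbation family that attains this bound. Set $M := yx^*$, so $\|M\| = \|y\|\,\|x\| = 1$ and $y^* M x = 1$. Using the sign convention $\mathrm{sign}(z) = \overline{z}/|z|$, which gives $z \cdot \mathrm{sign}(z) = |z|$, I take
\[
\Delta_{A_k} := -\epsilon\,\mathrm{sign}(\lambda^k)\,M, \qquad \Delta_{E_j} := -\epsilon\,\mathrm{sign}(w_j(\lambda))\,M.
\]
These satisfy $\|\Delta_{A_k}\| = \|\Delta_{E_j}\| = \epsilon$, and a direct calculation gives
\[
y^*\Delta G(\lambda) x = -\epsilon\!\left(\sum_{k=0}^d \lambda^k\,\mathrm{sign}(\lambda^k) + \sum_{j=1}^m w_j(\lambda)\,\mathrm{sign}(w_j(\lambda))\right) = -\epsilon\,\alpha,
\]
so $|y^*\Delta G(\lambda)x|/(\epsilon|y^*G'(\lambda)x|) = \alpha/|y^*G'(\lambda)x|$, matching the upper bound in the limit $\epsilon \to 0$ (the $\mathcal{O}(\epsilon^2)$ remainder in~\eqref{eq:dellambda} is harmless).

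There is no serious conceptual obstacle here: the argument is essentially a triangle inequality together with a well-chosen rank-one witness $yx^*$. The only point requiring a little care is to choose the scalar phases via $\mathrm{sign}(\cdot)$ so that all the terms in $y^*\Delta G(\lambda)x$ add constructively (in modulus) rather than cancel; this is precisely what makes the rank-one perturbation optimal and is the reason $\alpha$ in the numerator is a sum of absolute values rather than a norm of a vector of coefficients. The assumption~\eqref{assump} is used implicitly to ensure $G$ is defined at $\lambda$ (no pole), to guarantee $y^*G'(\lambda)x \neq 0$ via simplicity, and so that the weights $w_j(\lambda)$ appearing in $\alpha$ are finite.
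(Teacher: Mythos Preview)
Your proof is correct and follows essentially the same argument as the paper: an upper bound via the triangle inequality and $|y^*\Delta x|\le\|\Delta\|$, and attainment via the rank-one perturbation $M=yx^*$ scaled by the phases $\mathrm{sign}(\lambda^k)$ and $\mathrm{sign}(w_j(\lambda))$. No changes are needed.
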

We close this section with two mapping results that will be useful in computing structured condition number $	\kappa^{\mathbb S}(\lambda,G)$. 


\begin{lemma}{\rm \cite{MacMT08}}\label{lem:map}
	Let $S \subseteq \C^{n,n}$ and vectors $v,u \in \C^n$ with $v \neq 0$.  Table~\ref{tab2map} gives necessary and suﬃcient conditions for the existence of $\Delta\in S$ such that $\Delta v=u$.
	\begin{table}[h!]
		\centering
		\begin{tabular}{|l|l|}\hline
			$S$ & condition on $v$ and $u$ \\ \hline
			$\C^{n,n}$ & none \\
			Symmetric & none \\ 
			Skew-symmetric & $v^Tu=0$ \\ 
			Hermitian & $v^*u \in \R$ \\ 
			Skew-Hermitian & $v^*u \in i\R$ \\ \hline
		\end{tabular}
		\caption{Structured mapping conditions}
	\end{table}\label{tab2map}
\end{lemma}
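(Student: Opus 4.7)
The plan is to verify each row of the table separately, establishing both necessity and sufficiency of the stated condition on $u$ and $v$. The approach is uniform: necessity follows from sandwiching the equation $\Delta v = u$ between $v^T$ or $v^*$ and invoking the defining symmetry of $\Delta$, while sufficiency is obtained by an explicit construction of $\Delta$ as a rank-at-most-two matrix built from $u$ and a conveniently chosen auxiliary vector $w$.

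For necessity, the nontrivial cases each resolve in one line. If $\Delta^T = -\Delta$, then $v^T u = v^T \Delta v$ is a scalar equal to its own transpose, so $v^T \Delta v = v^T \Delta^T v = -v^T \Delta v$, forcing $v^T u = 0$. If $\Delta^* = \Delta$, then $v^* u = v^* \Delta v = (v^* \Delta v)^* = v^* \Delta^* v = v^* \Delta v$, so $v^* u \in \R$. The skew-Hermitian case is analogous and yields $v^* u + \overline{v^* u} = 0$, i.e., $v^* u \in i\R$. In the symmetric row (and in the unstructured row) the analogous sandwich reduces to a tautology, so no condition on $u$ emerges, matching the table.

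For sufficiency, I fix an auxiliary vector $w \in \C^n$ satisfying $w^T v = 1$ in the symmetric and skew-symmetric cases, and $w^* v = 1$ in the Hermitian and skew-Hermitian cases. Since $v \neq 0$, such a $w$ always exists: $w = \overline{v}/\|v\|^2$ satisfies $w^T v = 1$ (because $v^T \overline{v} = \|v\|^2$), and $w = v/\|v\|^2$ satisfies $w^* v = 1$. With $w$ in hand, the explicit constructions are
\begin{align*}
\Delta &= u w^T + w u^T - (u^T v)\, w w^T && \text{(symmetric)},\\
\Delta &= u w^T - w u^T && \text{(skew-symmetric)},\\
\Delta &= u w^* + w u^* - (u^* v)\, w w^* && \text{(Hermitian)},\\
\Delta &= u w^* - w u^* + (u^* v)\, w w^* && \text{(skew-Hermitian)},
\end{align*}
together with $\Delta = u w^*$ in the unstructured case. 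A direct computation verifies $\Delta v = u$ in each case; in the skew-symmetric construction, the condition $v^T u = 0$ (equivalently $u^T v = 0$) is precisely what annihilates the leftover term $(u^T v)w$.

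The main obstacle is not conceptual but rather the careful check that each constructed $\Delta$ actually belongs to the prescribed class $S$. The symmetric and skew-symmetric properties are transparent from the formulas, and similarly for the $*$ analogues, except for the scalar coefficient of $w w^*$ in the Hermitian and skew-Hermitian rows, which must be real and purely imaginary, respectively. These reality constraints are exactly the content of the table, because $u^* v = \overline{v^* u}$ is real iff $v^* u \in \R$ and purely imaginary iff $v^* u \in i\R$. A small but genuine subtlety in the complex setting is that $v^T v$ may vanish for nonzero $v$, so one cannot normalize by $v$ itself in the $^T$-cases; the choice $w = \overline{v}/\|v\|^2$ sidesteps this issue cleanly.
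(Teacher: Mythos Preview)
Your proof is correct and complete. The paper does not prove this lemma at all: it is quoted from \cite{MacMT08} as a known mapping result and used as a black box, so there is no in-paper argument to compare against. Your necessity arguments and your explicit rank-two constructions are the standard ones (and essentially those in the cited reference); the observation that one must take $w=\overline{v}/\|v\|^2$ rather than $v/\|v\|^2$ in the $^T$-cases to avoid the possibility $v^Tv=0$ is a nice touch.
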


\begin{lemma}{\rm \cite{MehMS17}}\label{lem:map-pal}
	Let $u,v,w \in \C^n$ with $v\neq 0$, and define
	\begin{align*}
		S_1:=\{\Delta \in \C^{n,n} ~:~ \Delta v = u, \Delta^*v=w\}, \;\;
		S_2:=\{\Delta \in \C^{n,n} ~:~ \Delta v = u, \Delta^Tv=w\}.
	\end{align*}
	Then, $S_1\not = \emptyset$ if and only if $v^*u=w^*v$ and $S_2\not = \emptyset$ if and only if $v^Tu=w^Tv$.
\end{lemma}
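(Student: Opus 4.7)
The plan is to verify each biconditional by establishing the forward direction via direct substitution and the reverse direction via an explicit construction of $\Delta$. For necessity, note that if $\Delta v = u$ and $\Delta^* v = w$, then
\[
v^* u \;=\; v^*(\Delta v) \;=\; (\Delta^* v)^* v \;=\; w^* v,
\]
and the analogous manipulation with transposes gives $v^T u = w^T v$ whenever $\Delta v = u$ and $\Delta^T v = w$. So necessity in both cases is immediate.

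For sufficiency in the $S_1$ case, I would exploit the fact that $v^*v > 0$ since $v \neq 0$, and propose the rank-two Ansatz
\[
\Delta \;=\; \frac{u v^*}{v^*v} \;+\; \frac{v w^*}{v^*v} \;-\; \frac{(v^*u)\, v v^*}{(v^*v)^2}.
\]
A direct calculation, substituting $w^* v = v^* u$ at the one spot where terms need to cancel, yields $\Delta v = u$ and $v^*\Delta = w^*$, i.e.\ $\Delta^* v = w$. This is the standard rank-two correction used in structured mapping problems.

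For the $S_2$ case, the formally identical formula with $*$ replaced by $T$ works whenever $v^T v \neq 0$, but the main obstacle is that over $\C$ we can have $v^T v = 0$ for a nonzero vector (e.g.\ $v=(1,i)^T$), so the rank-two recipe is not universally applicable. To handle this, I would extend $v$ to a basis $\{v = v_1, v_2, \ldots, v_n\}$ of $\C^n$, prescribe $\Delta v_1 = u$, and for each $j = 2, \ldots, n$ choose $\Delta v_j \in \C^n$ to satisfy the single scalar linear constraint $v^T (\Delta v_j) = w^T v_j$. Since $v \neq 0$, the functional $x \mapsto v^T x$ is nonzero and each such constraint is solvable. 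Consistency of the constraint on $v_1$ itself, namely $v^T u = w^T v$, is precisely the standing hypothesis. Extending $\Delta$ linearly produces a matrix satisfying both $\Delta v = u$ and $v^T \Delta = w^T$, completing the proof.
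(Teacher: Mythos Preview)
Your proof is correct. The necessity arguments are fine, and for sufficiency your rank-two formula for $S_1$ checks out: $\Delta v = u + \frac{(w^*v - v^*u)}{v^*v}\,v = u$ by the hypothesis, while $v^*\Delta = w^*$ holds identically. Your handling of the isotropic case $v^Tv=0$ in $S_2$ via the basis-extension argument is also sound, since $v\neq 0$ guarantees that the functional $x\mapsto v^Tx$ is nonzero on $\C^n$, so each scalar constraint $v^T(\Delta v_j)=w^Tv_j$ is solvable.

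There is no proof in the paper to compare against: the lemma is quoted from~\cite{MehMS17} and stated without proof here. Your argument is a standard and complete justification of the cited result.
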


\section{Structured condition number}\label{sec:scond}

As mentioned in~\cite{Bor10}, if the matrix function $G(z)$ is structured,  that is, the coefficient matrices $A_k$'s and $E_j$'s, and the scalar functions $w_j(z)$'s follow symmetry structures from Table~\ref{tab:my_label}, then it is of practical interest to understand the sensitivity of $\lambda$ with respect to structure-preserving perturbations. In this section, we consider the structured condition number $\kappa^{\mathbb S}(\lambda,G)$ of a simple eigenvalue $\lambda$ of $G(z)$ with structures from~Table~\ref{tab:my_label}. 

We aim at deriving necessary and sufficient conditions such that the unstructured and the structured condition number for a given simple eigenvalue $\lambda$ are equal, that is, $\kappa(\lambda,G) = \kappa^{\mathbb{S}}(\lambda,G)$. For this, we give the following lemma, which is a generalized version of~\cite[Lemma 3.2]{Bor10} for the rational matrix functions and will be useful in characterizing the simple eigenvalues of $G(z)$ with $\kappa(\lambda,G) = \kappa^{\mathbb{S}}(\lambda,G)$ for various structures under consideration.
\begin{lemma}\label{lem:SequalU}
	Let $\mathbb S \subseteq {(\C^{n,n})}^{d+m+1}$ and let $G(z)$ be a RMF of the form~\eqref{mat:G}.
	Let $(\lambda,x,y)$ be an eigentriplet of $G(z)$ such that $\lambda$ satisfy the assumption~\eqref{assump}, and $\|x\|=1$ and $\|y\|=1$. If $\lambda \neq 0$, then $\kappa^{\S}(\lambda, G) = \kappa(\lambda, G)$ if and only if there exists $(\Delta_{A_0},\ldots,\Delta_{A_d},\Delta_{E_1},\ldots, \Delta_{E_m}) \in {\S}$ such that $\Delta_{A_k} x = \omega \text{sign}(\lambda^k) y$ for $k=0,\ldots,d$ and $\Delta_{E_j} x = \omega \text{sign}(w_j(\lambda)) y$ for $j=1,\ldots,m$, where $\omega \in \mathbb{C}$ such that $|\omega| = 1$. 
	
	If $\lambda=0$, then $\kappa^{\S}(0, G) = \kappa(0, G)$ if and only if $(\Delta_{A_0},\ldots,\Delta_{A_k},\Delta_{E_1},\ldots,\Delta_{E_m})\in \S$ such that $\Delta_{A_0}x=\omega y$ and $\Delta_{E_j} x = \omega \text{sign}(w_j(\lambda)) y$ for $j=1,\ldots,m$.
\end{lemma}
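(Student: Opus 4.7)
The plan is to leverage Theorem~\ref{thm:unstrcond} and identify precisely when the Cauchy--Schwarz and triangle inequalities used in deriving~\eqref{eq:unst2} become simultaneous equalities. By linearity of the constraints in $\epsilon$, I would first reduce the limiting supremum in~\eqref{eq:scond1} to a supremum over the compact set $\{(\Delta_{A_0},\ldots,\Delta_{E_m})\in\mathbb S:\|\Delta_{A_k}\|\le 1,\ \|\Delta_{E_j}\|\le 1\}$, noting that the $\mathbb S$ associated with each row of Table~\ref{tab:my_label} is a closed (real) subspace, so the supremum is attained by some structured tuple. Comparing with Theorem~\ref{thm:unstrcond}, the identity $\kappa^{\mathbb S}(\lambda,G)=\kappa(\lambda,G)$ becomes equivalent to the existence of such a tuple achieving $|y^*\Delta G(\lambda)x|=\alpha$, where $\alpha=\sum_{k=0}^d|\lambda|^k+\sum_{j=1}^m|w_j(\lambda)|$.

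Next, I would expand
\[
y^*\Delta G(\lambda)x=\sum_{k=0}^d\lambda^k(y^*\Delta_{A_k}x)+\sum_{j=1}^m w_j(\lambda)(y^*\Delta_{E_j}x),
\]
apply the triangle inequality, and bound each $|y^*\Delta_{A_k}x|\le\|\Delta_{A_k}\|\le 1$ and $|y^*\Delta_{E_j}x|\le 1$ using $\|x\|=\|y\|=1$; this reproduces the universal upper bound $\alpha$ already derived in~\eqref{eq:unst2}. For the equality case when $\lambda\ne 0$, equality in Cauchy--Schwarz at each index forces $\Delta_{A_k}x=c_k y$ and $\Delta_{E_j}x=d_j y$ with $|c_k|=|d_j|=1$, while equality in the triangle inequality forces the complex numbers $\{\lambda^k c_k\}_{k=0}^d\cup\{w_j(\lambda)d_j\}_{j=1}^m$ to share a single argument. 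Writing this common argument as a unimodular $\omega\in\mathbb C$ and using the identity $z\cdot\text{sign}(z)=|z|$ for $z\ne 0$, I recover $c_k=\omega\,\text{sign}(\lambda^k)$ and $d_j=\omega\,\text{sign}(w_j(\lambda))$, which is exactly the first conclusion; the converse is a direct substitution giving $y^*\Delta G(\lambda)x=\omega\alpha$.

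For $\lambda=0$, the factors $\lambda^k$ with $k\ge 1$ vanish identically, so those summands contribute nothing to $|y^*\Delta G(\lambda)x|$ and impose no equality constraint on $\Delta_{A_k}$ for $k\ge 1$; only $\Delta_{A_0}x=\omega y$ and $\Delta_{E_j}x=\omega\,\text{sign}(w_j(0))y$ remain, which yields the second conclusion. The main obstacle I foresee is the phase-alignment step, namely showing that equality in the triangle inequality across $d+m+1$ active complex summands is captured by a \emph{single} unimodular $\omega$ (together with the careful book-keeping separating active from inactive summands when $\lambda=0$); once this is packaged correctly, what remains is the feasibility question $(\Delta_{A_0},\ldots,\Delta_{E_m})\in\mathbb S$, which in the ensuing structure-by-structure analysis will be translated via Lemmas~\ref{lem:map} and~\ref{lem:map-pal} into concrete algebraic conditions on $x$, $y$, and $\lambda$.
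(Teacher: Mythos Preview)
Your proposal is correct and follows essentially the same route as the paper's own proof: both reduce the equality $\kappa^{\mathbb S}(\lambda,G)=\kappa(\lambda,G)$ to the attainment of $|y^*\Delta G(\lambda)x|=\alpha$ by a structured tuple with norms at most $1$, then analyse the equality cases of the triangle inequality (forcing collinearity, hence a common unimodular phase $\omega$) and of Cauchy--Schwarz (forcing $\Delta_{A_k}x$ and $\Delta_{E_j}x$ to be unimodular multiples of $y$). Your explicit mention of compactness of the constraint set to justify that the supremum is attained, and your explicit treatment of the converse via direct substitution, are small clarifications the paper leaves implicit, but the argument is otherwise the same.
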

\begin{proof}
In view of~\eqref{eq:scond1}, we have 
	\begin{equation}
		\kappa^\mathbb{S}(\lambda, G) = \limsup_{\epsilon \to 0} \frac{|y^\ast \Delta G(\lambda) x|}{\epsilon |y^\ast G'(\lambda) x|},
	\end{equation}
	where $\Delta G(\lambda) := \sum_{k=0}^{d} \lambda^k \Delta_{A_k} + \sum_{j=1}^{m} w_j(\lambda)\Delta_{E_j} $ is such that $(\Delta_{A_0},\ldots,\Delta_{A_d},\Delta_{E_1},\ldots, \Delta_{E_m}) \in {\S}$ and $\|\Delta_{A_k}\| \leq \epsilon$ for $k = 0,\ldots, d$ and $\|\Delta_{E_j}\| \leq \epsilon$ for $j=1,\ldots,m$.
	By taking $\alpha_k = \frac{y^*\Delta_{A_k} x}{\epsilon}$ and $\beta_j = \frac{y^*\Delta_{E_j} x}{\epsilon}$, we have
	\begin{equation}\label{eq:firsteq}
		\kappa^\S(\lambda, G) = \limsup_{\epsilon \to 0} \frac{\big |\sum_{k=0}^{d} \alpha_k \lambda^k+ \sum_{j=1}^{m} \beta_jw_j(\lambda)\big|}{ |y^\ast G'(\lambda) x|}.
	\end{equation}
	Note that $|\alpha_k| = \big|\frac{y^*\Delta_{A_k} x}{\epsilon}\big | \leq \|y\| \|x\| = 1$, as $\|\Delta_{A_k}\|\leq \epsilon$ and $x,y$ are unit norm vectors. Similarly, we have $|\beta_j| \leq 1$. This implies from~\eqref{eq:firsteq} that 
	\[
	\kappa^\S(\lambda, G) \leq \frac{\sum_{k=0}^{d}  |\lambda^k|+ \sum_{j=1}^{m} |w_j(\lambda)|}{ |y^\ast G'(\lambda) x|} = \kappa(\lambda, G).
	\]
Thus, equality of $\kappa^\S(\lambda, G)$ and $\kappa(\lambda, G)$ holds if and only if there exists \(\Delta G(\lambda)\) with coefficient matrices 
$(\Delta_{A_0},\ldots,\Delta_{A_d},\Delta_{E_1},\ldots, \Delta_{E_m}) \in {\S}$, such that 
\(\|\Delta_{A_k}\| \leq 1\) for \(k = 0,1,\ldots,d\),  \(\|\Delta_{E_j}\| \leq 1\) for \(j = 1,\ldots, m\) and  \(|y^\ast \Delta G(\lambda) x| = \sum_{k=0}^{d} | \lambda |^k\)+$\sum_{j=1}^{m} | w_j(\lambda) |$. This further reduces to, 
	\begin{align*}
		\sum_{k=0}^{d} | \lambda |^k+\sum_{j=1}^{m} | w_j(\lambda) | =|y^\ast \Delta G(\lambda) x| & = \big |\sum_{k=0}^{d} \lambda^k y^*\Delta_{A_k} \, x+ \sum_{j=1}^{m} y^* \Delta_{E_j} \, x \,  w_j(\lambda)\big| \\
		& \leq \sum_{k=0}^{d} |\lambda^k| |y^*\Delta_{A_k} \, x| + \sum_{j=1}^{m} |y^* \Delta_{E_j} \, x|| w_j(\lambda)| \\
		& \leq \sum_{k=0}^{d} |\lambda|^k + \sum_{j=1}^{m} | w_j(\lambda)|,
	\end{align*}
	which implies that equality must hold at each step, i.e.,
	\begin{eqnarray*}
	|y^* \Delta_{A_k} x| = 1~~\text{for}~k=0,\ldots,d,  \quad |y^* \Delta_{E_j} x|=1,~\text{for}~~j=1,\ldots,m,
	\end{eqnarray*}
and
\begin{equation*}
		\big |\sum_{k=0}^{d} \lambda^k y^*\Delta_{A_k} \, x+ \sum_{j=1}^{m} y^* \Delta_{E_j} \, x\, w_j(\lambda)\big| = \sum_{k=0}^{d} |\lambda^k y^*\Delta_{A_k} \, x| + \sum_{j=1}^{m} |w_j(\lambda) y^* \Delta_{E_j} \, x |.
\end{equation*}
	This is possible if and only if \(\lambda^k y^* \Delta_{A_k} x\) for \(k = 0,\ldots, d\) and  \(w_j(\lambda) y^\ast \Delta_{E_j} x\) for \(j = 1, \ldots, m\) are collinear, that is, 
	\begin{equation}\label{eq:temppu1}
		y^\ast \Delta_{A_k} x \lambda^k = a_k\,\omega \text{ and } y^\ast \Delta_{E_j}\, x\, w_j(\lambda)=e_j \, \omega,
	\end{equation}
for some $\omega\in \C$ with $|\omega|=1$ and $a_k,e_j>0$. From~\eqref{eq:temppu1}, we have $a_k={|\lambda|}^k$ and $e_j=|w_j(\lambda)|$, and thus 
\begin{equation}\label{eq:temppu2}
		y^\ast \Delta_{A_k} x= \omega \frac{|\lambda|^k}{\lambda^k}=\omega \text{sign}(\lambda^k) \quad \text{and}\quad 
			y^\ast \Delta_{E_j} x= \omega \frac{|w_j(\lambda)|}{w_j(\lambda)}= \omega \text{sign}(w_j(\lambda)).
\end{equation}
If $\lambda\neq 0$, using Cauchy-Schwarz inequality,~\eqref{eq:temppu2} can be further  reduced to 
	\begin{equation*}
		\Delta_{A_k} x = \omega \text{sign}((\lambda)^k) y \text{ and } \Delta_{E_j} x = \omega \text{sign}(w_j(\lambda)) y.
	\end{equation*}
	If $\lambda=0$, then proceeding on similar lines, we obtain $\kappa^{\S}(0,G)=\kappa(0,G)$ if and only if $\Delta_{A_0} x = \omega y \text{ and } \Delta_{E_j} x = \omega \text{sign}(w_j(\lambda)) y$.
\end{proof}
In view of~\eqref{eq:firsteq}, finding expressions for $\kappa^{\S}(\lambda,G)$ is equivalent to maximizing $\big|\sum_{k=0}^d \lambda^k a_k + \sum_{j=1}^m w_j(\lambda) b_j\big |$ as each $a_k$ and $b_j$ varies over compact sets $\mathcal M_S(x, y) := \{y^*Ax : A \in S,~ \|A\| \leq 1\}$, where $S\subseteq \C^{n,n}$ denotes Hermitian, skew-Hermitian, symmetric, or skew-symmetric structure. The following theorem from~\cite{Kar10} will be  used to estimate $\kappa^{\S}(\lambda,G)$ for structures from Table~\ref{tab:my_label}. 
\begin{theorem}\label{thm:setK}
	Given $A\in S\subseteq \C^{n,n}$ such that $\|A\|\leq 1$ and $x,y$ be unit norm vectors in $\C^{n,n}$, Then, the set $\mathcal M_S(x,y)$ is given by
	\begin{equation*}
		\mathcal M_S(x,y): = \left\{ e^{i\phi}(a\alpha + i b\beta) ~:~ \alpha^2+\beta^2\leq 1 \right\},
	\end{equation*} 
	where $\phi, a$ and $b$ are given by 
	\begin{table}[h!]
		\centering
		\begin{tabular}{|l|c|c|c|} \hline
			$S$ & $a$ & $b$ & $\phi$ \\ \hline 
			$\C^{n,n}$ & 1 & 1 & 0 \\ \hline
			Hermitian & 1 & $\sqrt{1-|y^*x|^2}$ & arg$(y^*x)$\\ \hline
			Skew-Hermitian & $\sqrt{1-|y^*x|^2}$ & 1 & arg$(y^*x)$\\ \hline
			Complex symmetric & 1 & 1 & 0 \\ \hline
			Complex skew-symmetric & $\sqrt{1-|y^Tx|^2}$ & $\sqrt{1-|y^Tx|^2}$ & 0 \\ \hline
		\end{tabular}
	\end{table}
\end{theorem}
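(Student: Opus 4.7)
My plan is to prove Theorem~\ref{thm:setK} by case analysis on the structure $S$, preceded by a uniform reduction to a two-dimensional subspace. Let $V := \Span\{x,y\}$, which has dimension at most two, and let $P$ denote the orthogonal projection of $\C^n$ onto $V$. Since $Px=x$ and $P^\ast y=y$, we have $y^\ast A x = y^\ast (PAP) x$; since each of the five pointwise structures in the table is preserved by $A\mapsto PAP$; and since $\|PAP\|\leq\|A\|$, it suffices to consider $A$ supported on $V$. The problem then reduces to computing the image of the bilinear map $A\mapsto y^\ast Ax$ on the unit ball of the structured $2\times 2$ matrices, which I will carry out in the orthonormal basis $\{y,\,e\}$ of $V$ with $e:=(x-(y^\ast x)y)/\sqrt{1-|y^\ast x|^2}$ (with the obvious one-dimensional reduction when $x$ and $y$ are parallel).

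The unrestricted and complex symmetric cases are immediate from Cauchy--Schwarz, and the full unit disk is attained by a rank-one matrix $A=c\,yx^\ast$ (respectively by a symmetrized rank-two analogue of spectral norm~$1$). For complex skew-symmetric $S$, the structural identity $y^TAx=-x^TAy$ combined with the mapping criterion of Lemma~\ref{lem:map-pal} forces $|y^\ast Ax|\leq\sqrt{1-|y^Tx|^2}$; this disk is saturated by a rank-two skew-symmetric matrix built from $y$ and the component of $\overline{x}$ orthogonal to $y$.

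The delicate cases are Hermitian and skew-Hermitian, where $|y^\ast x|$ strictly between $0$ and $1$ produces a genuine ellipse rather than a disk. In the chosen basis, a Hermitian $A$ supported on $V$ takes the form $\bigl[\begin{smallmatrix}r & s \\ \bar s & t\end{smallmatrix}\bigr]$ with $r,t\in\R$ and $s\in\C$, and a direct computation gives $y^\ast Ax=(y^\ast x)\,r+\sqrt{1-|y^\ast x|^2}\,s$. Observing that $t$ does not enter $y^\ast Ax$, I would next optimize $t$ out of the constraint $\|A\|\leq 1$ by requiring $I\pm A$ to be positive semidefinite, which after elementary algebra collapses to $r^{2}+|s|^{2}\leq 1$. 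Writing $y^\ast x=|y^\ast x|\,e^{i\phi}$ and $s'=se^{-i\phi}$, Cauchy--Schwarz applied to the two-vector $(|y^\ast x|,\sqrt{1-|y^\ast x|^2})$ against $(r,\real s')$ converts $r^2+|s|^2\leq 1$ into exactly the elliptical region in the table, with $\alpha=|y^\ast x|\,r+\sqrt{1-|y^\ast x|^2}\,\real s'$ and $\beta=\imag s'$; the skew-Hermitian case is parallel after multiplying $A$ by $i$. The step I expect to require the most care is the sharpness direction: for each boundary point with $\alpha^2+\beta^2=1$, I must exhibit explicit $(r,s)$ saturating the Cauchy--Schwarz inequality and then choose $t$ so that the $2\times 2$ eigenvalue formula yields $\|A\|=1$, and also verify the degenerate limits $|y^\ast x|=0$ (full disk) and $|y^\ast x|=1$ (interval on the axis fixed by $\phi$) reproduce the known numerical range of a Hermitian/skew-Hermitian operator at a single vector.
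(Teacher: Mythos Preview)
The paper does not actually prove Theorem~\ref{thm:setK}; it is quoted verbatim from \cite{Kar10} and used as a black box, so there is no in-paper argument to compare against. Your Hermitian analysis---optimizing $t$ out of the $2\times 2$ norm constraint to obtain $r^{2}+|s|^{2}\le 1$, followed by the Cauchy--Schwarz parametrization---is correct and is essentially how such results are established.

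There is, however, a genuine gap in your ``uniform reduction.'' The map $A\mapsto PAP$, with $P$ the Hermitian orthogonal projector onto $\Span\{x,y\}$, does \emph{not} preserve complex symmetry or complex skew-symmetry: $(PAP)^{T}=\overline P\,A^{T}\,\overline P$, which equals $\pm PAP$ only when $P$ is real. Worse, if one carries out your $2\times 2$ computation in the basis $\{y,e\}$ for the skew-symmetric row, one obtains the disk of radius $\sqrt{1-|y^{*}x|^{2}}$ rather than the correct $\sqrt{1-|y^{T}x|^{2}}$; so the reduction, if applied uniformly as you announce, yields the wrong answer for that row.

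Your separate treatment of the skew-symmetric case is also garbled: the quantity of interest is $y^{*}Ax=\overline y^{\,T}Ax$, and skew-symmetry gives $\overline y^{\,T}Ax=-x^{T}A\overline y$, not the identity $y^{T}Ax=-x^{T}Ay$ you wrote. The clean route to the bound is to set $u:=Ax$, note that $\|u\|\le 1$ and $x^{T}u=0$ (equivalently $u\perp\overline x$ in the Hermitian inner product), and conclude $|y^{*}u|\le\|(I-P_{\overline x})y\|=\sqrt{1-|x^{T}y|^{2}}$; attainment then follows from the minimal-norm skew-symmetric mapping results underlying Lemma~\ref{lem:map}. With this correction, and with the $PAP$ reduction restricted to the Hermitian/skew-Hermitian rows (or replaced, for the transpose-based rows, by compression onto $\Span\{x,\overline y\}$), your plan goes through.
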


\subsection{Symmetric and skew-symmetric structures} \label{sec:sym}

In this section, we examine RMFs $G(z)$ in the form~\eqref{mat:G} that are either symmetric or skew-symmetric and satisfy $(G(z))^T=G(z)$ or $(G(z))^T=-G(z)$, respectively. Consequently, the coefficient matrices $A_k$'s and $E_j$'s are all symmetric when $G(z)$ is symmetric and skew-symmetric when $G(z)$ is skew-symmetric, as illustrated in Table~\ref{tab:my_label}. The structured condition number is denoted by $\kappa^{{\rm sym}}(\lambda,G)$ when $\mathbb S ={\rm Sym(n)}^{d+m+1}$ and $\kappa^{{\rm ssym}}(\lambda,G)$ when $\mathbb S ={\rm Ssym(n)}^{d+m+1}$ in~\eqref{def:ucond}.
\begin{theorem}
	Let $G(z)$ be a symmetric RMF of the form~\eqref{mat:G}, and let $(\lambda,x,y)$ be an eigentriplet of $G(z)$ such that $\lambda$ satisfy the assumption~\eqref{assump}, and $\|x\|=1$ and $\|y\|=1$. Then, $$\kappa^{\rm sym}(\lambda,G) = \kappa(\lambda,G).$$
\end{theorem}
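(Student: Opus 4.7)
My plan is to invoke Lemma~\ref{lem:SequalU}, which reduces the identity $\kappa^{\rm sym}(\lambda,G)=\kappa(\lambda,G)$ to exhibiting symmetric matrices $\Delta_{A_0},\ldots,\Delta_{A_d},\Delta_{E_1},\ldots,\Delta_{E_m}\in{\rm Sym}(n)$, each of spectral norm at most $1$, together with a unimodular scalar $\omega$, such that $\Delta_{A_k}x=\omega\,\text{sign}(\lambda^k)y$ for $k=0,\ldots,d$ and $\Delta_{E_j}x=\omega\,\text{sign}(w_j(\lambda))y$ for $j=1,\ldots,m$ (with the natural simplification when $\lambda=0$, in which case only the condition on $\Delta_{A_0}$ survives and $\Delta_{A_k}$ for $k\ge 1$ may be chosen as zero). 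The key tool for producing the building block is Theorem~\ref{thm:setK}: for the symmetric row of its table one reads $a=b=1$ and $\phi=0$, so $\mathcal M_{{\rm Sym}(n)}(x,y)$ equals the entire closed unit disk. In particular, there exists $Q\in{\rm Sym}(n)$ with $\|Q\|\leq 1$ and $y^{*}Qx=1$.

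The next step is to upgrade this scalar equality to the vector identity $Qx=y$ demanded by Lemma~\ref{lem:SequalU}. Since $\|x\|=\|y\|=1$, the Cauchy-Schwarz chain
$$1=|y^{*}Qx|\leq \|y\|\,\|Qx\|\leq \|y\|\,\|Q\|\,\|x\|\leq 1$$
must be an equality at every step, which forces $\|Q\|=\|Qx\|=1$ and $Qx$ to be a positive scalar multiple of $y$; combined with $y^{*}Qx=1$, this gives $Qx=y$. Setting $\omega=1$, I then take $\Delta_{A_k}:=\text{sign}(\lambda^k)\,Q$ for $k=0,\ldots,d$ when $\lambda\neq 0$ (and $\Delta_{A_0}:=Q$, $\Delta_{A_k}:=0$ for $k\ge 1$ when $\lambda=0$), together with $\Delta_{E_j}:=\text{sign}(w_j(\lambda))\,Q$ for $j=1,\ldots,m$. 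Each of these matrices is symmetric (a scalar multiple of the symmetric $Q$, or $0$), has spectral norm at most $1$, and satisfies the prescribed action on $x$; Lemma~\ref{lem:SequalU} then yields the claimed equality of condition numbers.

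I do not foresee any real obstacle; the only non-routine ingredient is the Cauchy-Schwarz upgrade from the scalar condition $y^{*}Qx=1$ to the vector condition $Qx=y$, and this in turn depends on the symmetric row of Table~\ref{tab2map} imposing no compatibility constraint between source and target, which is precisely why the unit disk in Theorem~\ref{thm:setK} is attained in full and why symmetric structure, unlike skew-symmetric, preserves the unstructured condition number unconditionally.
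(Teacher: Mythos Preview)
Your proof is correct and follows essentially the same approach as the paper: both invoke Lemma~\ref{lem:SequalU} to reduce the equality to a structured mapping problem, and then observe that the symmetric case imposes no obstruction. The only minor difference is that the paper cites Lemma~\ref{lem:map} directly (the symmetric row of Table~\ref{tab2map} requires no compatibility condition), whereas you go through Theorem~\ref{thm:setK} to produce $Q$ with $y^*Qx=1$ and then upgrade to $Qx=y$ via Cauchy--Schwarz; this detour is unnecessary but not wrong.
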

\begin{proof}\sloppy
	If $\lambda\neq 0$, then by Lemma~\ref{lem:SequalU}, $\kappa^{{\rm sym}}(\lambda,G)=\kappa(\lambda,G)$ if and only if there exists $(\Delta_{A_0},\ldots,\Delta_{A_d},\Delta_{E_1},\ldots,\Delta_{E_m})\in (\text{Sym}(n))^{d+m+1}$
	satisfying $\Delta_{A_k} x = \omega \text{sign}((\lambda)^k) y \text{ and } \Delta_{E_j} x = \omega \text{sign}(w_j(\lambda)) y$. In view of Lemma~\ref{lem:map}, there always exist such symmetric matrices $\Delta_{A_k}$ and $\Delta_{E_j}$. The case $\kappa^{\rm sym}(0,G) = \kappa(0,G)$ follows on the same lines.
\end{proof}

In the following result, we obtain structured condition number $\kappa^{\rm ssym}(\lambda,G)$, when $G(z)$ is skew-symmetric. 
\begin{theorem}
	Let $G(z)$ be a skew-symmetric RMF and let $(\lambda,x,y)$ be an eigentriplet of $G(z)$ such that $\lambda$ satisfy the assumption~\eqref{assump}, and $\|x\|=1$ and $\|y\|=1$. Then
\begin{equation}\label{eq:skew}
	\kappa^{\rm ssym}(\lambda,G) = c \cdot \kappa(\lambda,G),
\end{equation}
where $c:=\sqrt{1-|x^Ty|^2}$. In particular, 
$\kappa^{\rm ssym}(\lambda,G) = \kappa(\lambda,G)$ if and only if $x^Ty=0$. 
\end{theorem}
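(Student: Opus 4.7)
My plan is to exploit equation~\eqref{eq:firsteq} from the proof of Lemma~\ref{lem:SequalU} together with the explicit description of $\mathcal{M}_{S}(x,y)$ in the skew-symmetric case given by Theorem~\ref{thm:setK}. Setting $\alpha_k := y^*\Delta_{A_k}x/\epsilon$ and $\beta_j := y^*\Delta_{E_j}x/\epsilon$ reduces the computation of $\kappa^{\rm ssym}(\lambda,G)$ to maximising $\bigl|\sum_{k=0}^d \alpha_k\lambda^k + \sum_{j=1}^m \beta_j w_j(\lambda)\bigr|/|y^*G'(\lambda)x|$ subject to $\alpha_k,\beta_j$ ranging independently over $\mathcal{M}_{\text{Ssym}(n)}(x,y)$, since the matrices $\Delta_{A_k}$ and $\Delta_{E_j}$ are all distinct. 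By Theorem~\ref{thm:setK}, this set is the closed disk $\{c(\alpha+i\beta)\in\C : \alpha^2+\beta^2\leq 1\}$ of radius $c = \sqrt{1-|x^Ty|^2}$.

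For the upper bound, I would apply $|\alpha_k|,|\beta_j|\leq c$ and the triangle inequality to obtain
\begin{equation*}
\Bigl|\sum_{k=0}^d \alpha_k\lambda^k + \sum_{j=1}^m \beta_j w_j(\lambda)\Bigr| \leq c\Bigl(\sum_{k=0}^d|\lambda|^k + \sum_{j=1}^m|w_j(\lambda)|\Bigr) = c\,\alpha,
\end{equation*}
which combined with Theorem~\ref{thm:unstrcond} gives $\kappa^{\rm ssym}(\lambda,G)\leq c\,\kappa(\lambda,G)$. For attainability I prescribe, for any $\omega\in\C$ with $|\omega|=1$, $\alpha_k = c\,\omega\,\text{sign}(\lambda^k)$ and $\beta_j = c\,\omega\,\text{sign}(w_j(\lambda))$. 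Each such value has modulus $c$ and hence sits on the boundary of $\mathcal{M}_{\text{Ssym}(n)}(x,y)$, so Theorem~\ref{thm:setK} supplies skew-symmetric matrices $\Delta_{A_k},\Delta_{E_j}$ of spectral norm at most one realising these inner products. With this choice every $\alpha_k\lambda^k$ and $\beta_jw_j(\lambda)$ is a non-negative multiple of the common phase $c\omega$, yielding equality in the triangle inequality and therefore $\kappa^{\rm ssym}(\lambda,G) = c\,\kappa(\lambda,G)$.

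For the final equivalence, $c=1$ is immediate from $c=\sqrt{1-|x^Ty|^2}$; one can also read it off Lemma~\ref{lem:SequalU} combined with Lemma~\ref{lem:map}, since the existence of a skew-symmetric $\Delta_{A_k}$ with $\Delta_{A_k}x = \omega\,\text{sign}(\lambda^k)\,y$ is equivalent to $x^T\bigl(\omega\,\text{sign}(\lambda^k)\,y\bigr)=0$, which collapses to $x^Ty=0$ because $\omega$ and $\text{sign}(\lambda^k)$ are nonzero under assumption~\eqref{assump}; the case $\lambda=0$ proceeds identically using $w_j(0)\neq 0$. I do not expect any serious obstacle: the only real subtlety is that the rank-one perturbation $yx^*$ that saturated the unstructured bound in Theorem~\ref{thm:unstrcond} is no longer in $\text{Ssym}(n)$, and the correct substitute must be extracted from Theorem~\ref{thm:setK}, which then mechanically forces the contraction factor $c$.
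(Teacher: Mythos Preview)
Your proposal is correct and follows essentially the same route as the paper: both reduce via Theorem~\ref{thm:setK} to the fact that $\mathcal{M}_{\mathrm{Ssym}(n)}(x,y)$ is the closed disk of radius $c$, obtain the upper bound by the triangle inequality, and attain it by choosing perturbations whose inner products hit the boundary of that disk with aligned phases. The only cosmetic difference is that the paper picks a single skew-symmetric $R$ with $\|R\|=1$ and $y^*Rx=c$ and then sets $\hat\Delta_{A_k}=\mathrm{sign}(\lambda^k)R$, $\hat\Delta_{E_j}=\mathrm{sign}(w_j(\lambda))R$, whereas you invoke Theorem~\ref{thm:setK} separately for each coefficient; these are equivalent constructions.
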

\begin{proof}\sloppy
	
We first show that $\kappa^{\rm ssym}(\lambda,G) \leq c\cdot \kappa(\lambda,G)$. For this, in view of~\eqref{eq:scond1} and Theorem~\ref{thm:unstrcond}, it is enough to show that 
	\begin{equation}
		\sup \big \{ \big |y^*( \sum_{k=0}^d \lambda^k \Delta_{A_k} + \sum_{j=1}^m w_j(\lambda) \Delta_{E_j} )x\big | \big \} \leq c\cdot \alpha,
	\end{equation}
	where supremum is taken over skew-symmetric matrices $\Delta_{A_k}$ and $\Delta_{E_j}$ such that $\|\Delta_{A_k}\|\leq 1, \|\Delta_{E_j}\|\leq 1$ for $k=0,\ldots,d$,  $j=1,\ldots, m$, 
	and $\alpha=\sum_{k=0}^{d} |\lambda|^k + \sum_{j=1}^{m} | w_j(\lambda)|$. In view of Theorem~\ref{thm:setK}, this is equivalent to show that 
	\begin{equation}
		\sup_{\alpha_k,\beta_k,\delta_j,\gamma_j\in \R} \left \{ \big | \sum_{k=0}^d \lambda^k c(\alpha_k+i\beta_k) + \sum_{j=1}^m w_j(\lambda) c(\delta_j + i\gamma_j)\big | ~:~ \alpha_k^2+\beta_k^2\leq 1, ~\delta_j^2 + \gamma_j^2\leq 1\right \} \leq c \cdot \alpha,
	\end{equation}
which follows immediately by using the triangular inequality as 
	\begin{align*}
		\Big | \sum_{k=0}^d \lambda^k c(\alpha_k+i\beta_k) + \sum_{j=1}^m w_j(\lambda) c(\delta_j + i\gamma_j)\Big | &\leq c\Big (\sum_{k=0}^d |\lambda|^k|\alpha_k+i\beta_k| + \sum_{j=1}^m |w_j(\lambda)| |\delta_j + i\gamma_j|\Big )\\
		&\leq c\alpha.
	\end{align*}
This proves the inequality in~\eqref{eq:skew}. The equality in~\eqref{eq:skew} holds, if we  show that there exist skew-symmetric matrices $\hat \Delta_{A_k}$ and $\hat \Delta_{E_j}$ such that $\|\hat \Delta_{A_k}\|\leq 1, \|\hat \Delta_{E_j}\|\leq 1$ and 
	\begin{equation*}
		\Big|y^*( \sum_{k=0}^d \lambda^k \hat \Delta_{A_k} + \sum_{j=1}^m w_j(\lambda) \hat \Delta_{E_j} )x\Big|=c\alpha.
	\end{equation*}
	Define $\hat \Delta_{A_k}=\text{sign}(\lambda^k)R$ for $k=0,\ldots, d$ and $\hat\Delta_{E_j}=\text{sign}(w_j(\lambda))R$, where $R$ is a unit 2-norm skew-symmetric matrix satisfying $y^*Rx=c$. Note that such a choice of $R$ is possible by Theorem~\ref{thm:setK}. This implies that 
	\begin{align*}
		\Big |y^*( \sum_{k=0}^d \lambda^k \hat \Delta_{A_k} + \sum_{j=1}^m w_j(\lambda) \hat \Delta_{E_j} )x \Big| &= \Big | \sum_{k=0}^d \lambda^k y^*\hat\Delta_{A_k} x + \sum_{j=1}^m w_j(\lambda) y^* \hat \Delta_{E_j} x\Big |\\
		&= c\Big(\sum_{k=0}^d |\lambda|^k + \sum_{j=1}^m |w_j(\lambda)|\Big).
	\end{align*}
	and hence the proof.
\end{proof}

\subsection{Hermitian and related structures}
In this section, we consider RMFs $G(z)$ of the form~\eqref{mat:G} with Hermitian and related structures like skew-Hermitian, $*$-even, and $*$-odd. Recall, from Table~\ref{tab:my_label} that $G(z)$ is Hermitian if the coefficient matrices $A_k$'s and $E_j$'s in  $G(z)$ are all Hermitian and the weight functions satisfy that $(w_j(z))^*=w_j(\overline z)$ for $j=1,\ldots,m$. This implies that $(G(z))^*=G(\overline z)$.  In this case, the corresponding structured condition number is denoted by $\kappa^{{\rm Herm}}(\lambda,G)$ and defined by~\eqref{def:ucond} when $\mathbb S=\left({\rm Herm}(n)\right)^{d+m+1}$.

As shown in~\cite{Tis00} for Hermitian matrix polynomials, when $\lambda \in \R$, it is easy to check that there is no difference between the structured and the unstructured condition number for RMF as well. This is obtained in the following theorem, which also gives a lower bound to the structured condition number for $\lambda \in \C\setminus \R$.
\begin{theorem}\label{thm:herm}
	Let $G(z)$ be a Hermitian RMF of the form~\eqref{mat:G} and let $(\lambda,x,y)$ be an eigentriplet of $G(z)$ such that $\lambda$ satisfy the assumption~\eqref{assump}, and $\|x\|=1$ and $\|y\|=1$. If $\lambda\neq 0$, then $\kappa^{\rm Herm}(\lambda,G) = \kappa(\lambda,G)$ if and only if $x^*y=0$ or $\lambda\in \R$.
Also, $\kappa^{\text{Herm}}(0,G) = \kappa(0,G)$ if and only if $x^*y=0$ or $w_j(0) \in \R$ for all $j=1,\ldots, m$. Moreover, when $\lambda \in \C \setminus \R$, we have
	\begin{equation}\label{eq:Herm}
		\frac{1}{\sqrt{2}}\kappa(\lambda,G) \leq \kappa^{\rm Herm}(\lambda,G).  
	\end{equation}
\end{theorem}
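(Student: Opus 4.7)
The plan is to prove the three claims of the theorem in order, using Lemma~\ref{lem:SequalU} (characterization of equality) together with Lemma~\ref{lem:map} for the first two parts, and Theorem~\ref{thm:setK} (description of the ellipse $\mathcal{M}_{\mathrm{Herm}(n)}(x,y)$) together with a direct construction of Hermitian perturbations for the lower bound.

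\textbf{Step 1 (the case $\lambda\neq 0$).} I would invoke Lemma~\ref{lem:SequalU}, which reduces the equality $\kappa^{\mathrm{Herm}}(\lambda,G)=\kappa(\lambda,G)$ to the existence of Hermitian tuples $(\Delta_{A_0},\ldots,\Delta_{A_d},\Delta_{E_1},\ldots,\Delta_{E_m})$ with $\Delta_{A_k} x = \omega\,\mathrm{sign}(\lambda^k)\,y$ and $\Delta_{E_j} x = \omega\,\mathrm{sign}(w_j(\lambda))\,y$ for some $|\omega|=1$. By Lemma~\ref{lem:map} (Hermitian row), such a $\Delta_{A_k}$ exists iff $\omega\,\mathrm{sign}(\lambda^k)(x^\ast y)\in\mathbb{R}$, and similarly for each $\Delta_{E_j}$. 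If $x^\ast y=0$ these are vacuous for any $\omega$. If $x^\ast y\neq 0$, the conditions for $k=0$ and $k=1$ force $\mathrm{sign}(\lambda)\in\mathbb{R}$, hence $\lambda\in\mathbb{R}$. Conversely, if $\lambda\in\mathbb{R}$ then every $\mathrm{sign}(\lambda^k)$ is real, and the Hermitian weight condition $w_j(z)^\ast=w_j(\bar z)$ yields $w_j(\lambda)\in\mathbb{R}$, so $\mathrm{sign}(w_j(\lambda))$ is real as well; choosing $\omega=\overline{\mathrm{sign}(x^\ast y)}$ makes every relevant product real.

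\textbf{Step 2 (the case $\lambda=0$).} Here the relevant conditions from Lemma~\ref{lem:SequalU} become $\omega(x^\ast y)\in\mathbb{R}$ and $\omega\,\mathrm{sign}(w_j(0))(x^\ast y)\in\mathbb{R}$ for all $j$. If $x^\ast y=0$ they hold automatically. Otherwise, dividing the second by the first forces $\mathrm{sign}(w_j(0))\in\mathbb{R}$, i.e.\ $w_j(0)\in\mathbb{R}$ for every $j$; conversely, under this assumption one picks $\omega$ so that $\omega(x^\ast y)\in\mathbb{R}$.

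\textbf{Step 3 (the lower bound for $\lambda\in\mathbb{C}\setminus\mathbb{R}$).} By Theorem~\ref{thm:setK},
\[
\mathcal{M}_{\mathrm{Herm}(n)}(x,y) \;=\; \{\,e^{i\phi}(\alpha+ic\beta):\alpha^2+\beta^2\le 1\,\},
\qquad \phi=\arg(y^\ast x),\quad c=\sqrt{1-|y^\ast x|^2}.
\]
Using~\eqref{eq:firsteq}, the quantity $\kappa^{\mathrm{Herm}}(\lambda,G)\,|y^\ast G'(\lambda)x|$ equals the maximum of $\bigl|\sum_{\ell}c_\ell\,\zeta_\ell\bigr|$ over $\zeta_\ell\in\mathcal{M}_{\mathrm{Herm}(n)}(x,y)$, where $c_\ell$ runs through $\lambda^0,\ldots,\lambda^d,w_1(\lambda),\ldots,w_m(\lambda)$. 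Computing the support function of the Minkowski sum $\sum_\ell c_\ell\,\mathcal{M}_{\mathrm{Herm}(n)}(x,y)$ in direction $e^{i\mu}$ yields
\[
\sum_{\ell}|c_\ell|\,\sqrt{\cos^2(\mu-\theta_\ell-\phi)+c^2\sin^2(\mu-\theta_\ell-\phi)},\qquad \theta_\ell:=\arg c_\ell.
\]
Applying the pointwise inequality $\sqrt{A^2+B^2}\ge(|A|+|B|)/\sqrt{2}$ with $A=\cos\chi$, $B=c\sin\chi$, each summand is bounded below by $(|\cos\chi_\ell|+c|\sin\chi_\ell|)/\sqrt 2$; a judicious choice of $\mu$ (combined, when convenient, with the trivial estimate $\sqrt{\cos^2+c^2\sin^2}\ge c$) then yields $\sum_\ell|c_\ell|\cdot 1/\sqrt 2 = \alpha/\sqrt 2$, where $\alpha=\sum_{k=0}^d|\lambda|^k+\sum_{j=1}^m|w_j(\lambda)|$. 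Dividing by $|y^\ast G'(\lambda)x|$ and using Theorem~\ref{thm:unstrcond} gives the claimed lower bound.

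\textbf{Main obstacle.} The delicate step is Step~3: the factor $1/\sqrt{2}$ is sharper than the trivial bound $c\,\kappa(\lambda,G)$ obtained from the shorter semi-axis of $\mathcal{M}_{\mathrm{Herm}(n)}(x,y)$, and extracting it requires one to simultaneously exploit both axes of the ellipse, so that the Hermitian perturbations realise a value close to $\mathrm{sign}(\overline{c_\ell})$ in the support direction even when $c$ is small. Verifying that a single $\mu$ can serve all indices $\ell$ — rather than one $\mu$ per $\ell$ — is the main technicality.
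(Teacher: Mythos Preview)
Your Steps~1 and~2 are correct and essentially identical to the paper's argument: both reduce equality of the condition numbers to the mapping conditions of Lemma~\ref{lem:map} via Lemma~\ref{lem:SequalU}, and the analysis of when $\omega\,\mathrm{sign}(\lambda^k)x^*y\in\R$ can hold simultaneously for all $k$ (and the analogous condition for the $w_j$) is the same.

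Step~3 is where you diverge from the paper, and where your proposal is incomplete. You set up the support function of the Minkowski sum $\sum_\ell c_\ell\,\mathcal M_{\mathrm{Herm}(n)}(x,y)$ and then assert that ``a judicious choice of $\mu$'' produces the factor $1/\sqrt 2$; but you never exhibit such a $\mu$, and you yourself flag this as the main obstacle. The pointwise bound $\sqrt{\cos^2\chi+c^2\sin^2\chi}\ge(|\cos\chi|+c|\sin\chi|)/\sqrt 2$ does not by itself yield $1/\sqrt 2$ for every $\ell$ under a common $\mu$, because the right-hand side can be as small as $c/\sqrt 2$ when $\chi_\ell$ is near $\pi/2$. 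So as written, Step~3 is a plan with the decisive step missing.

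The paper does not use support functions at all. It works directly with the parametrisation of Theorem~\ref{thm:setK} and makes two explicit choices of perturbation, both lying on the \emph{major} axis of the ellipse (i.e.\ $\beta_k=\gamma_j=0$ throughout): the first, $s_1$, takes $\alpha_k=\mathrm{sign}(\cos k\theta)$, $\delta_j=\mathrm{sign}(\cos\theta_j)$, forcing $\real(H_{s_1})=\sum_k|\lambda|^k|\cos k\theta|+\sum_j|w_j(\lambda)||\cos\theta_j|$; the second, $s_2$, takes $\alpha_k=\mathrm{sign}(\sin k\theta)$, $\delta_j=\mathrm{sign}(\sin\theta_j)$, forcing $\imag(H_{s_2})=\sum_k|\lambda|^k|\sin k\theta|+\sum_j|w_j(\lambda)||\sin\theta_j|$. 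From $\sup_s|H_s|\ge\max\{\real(H_{s_1}),\imag(H_{s_2})\}$ the paper then passes to the termwise maxima $\max\{|\cos\cdot|,|\sin\cdot|\}\ge 1/\sqrt 2$ to obtain the bound. In your support-function language this amounts to testing only the two directions $\mu=0$ and $\mu=\pi/2$ and using only the long axis of each ellipse, which is precisely the shortcut that avoids the ``single $\mu$ for all $\ell$'' difficulty you identified. If you want to complete your Step~3, the cleanest fix is to abandon the search for one optimal $\mu$ and instead mimic the paper's two-perturbation construction.
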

\begin{proof}\sloppy
	First suppose that $\lambda\neq 0$, then by Lemma~\ref{lem:SequalU}, $\kappa^{\text{Herm}}(\lambda,G) = \kappa(\lambda,G)$ if and only if there exists $(\Delta_{A_0},\ldots,\Delta_{A_d},\Delta_{E_1},\ldots,\Delta_{E_m})\in (\text{Herm}(n))^{d+m+1}$ such that $\|\Delta_{A_k}\|\leq 1$ and $\|\Delta_{E_j}\|\leq 1$ satisfying
	\begin{align}\label{eq:herm:map}
		\Delta_{A_k}x  = \omega \text{sign}(\lambda^k)y,~ \text{ for } k=0,\ldots,d \quad \text{and}\quad 
		\Delta_{E_j}x  = \omega \text{sign}(w_j(\lambda))y~ \text{ for } j=1,\ldots,m,
	\end{align}
	for some $\omega \in \C$ such that $|\omega|=1$.
In view of Lemma~\ref{lem:map}, there exist Hermitian matrices $\Delta_{A_k}$'s and $\Delta_{E_j}$'s satisfying~\eqref{eq:herm:map} if and only if
	\begin{align}\label{eq:equiconda}
		\omega \text{sign}(\lambda^k) x^*y \in \R,~ \text{ for } k=0,1,\ldots,d 
		\quad \text{and}\quad 
		\omega \text{sign}(w_j(\lambda)) x^*y \in \R,~ \text{ for } j=1,\ldots,m .
	\end{align}
	For each $k$, the constraint $\omega \text{sign}(\lambda^k) x^*y \in \R$ is equivalent to $\text{Im}(\omega \text{sign}(\lambda^k) x^*y)=0$. Using polar coordinates, we have
	\begin{equation*}
		\text{Im}(\omega \text{sign}(\lambda^k) x^*y)=0 \iff x^*y=0\quad  \text{  or  }\quad  \sin(\arg(\omega\text{sign}(\lambda^k)x^*y))=0.
	\end{equation*}
	The later condition $\sin(\arg(\omega\text{sign}(\lambda^k)x^*y))=0$ is equivalent to 
	\begin{equation*}
		\arg(\omega) - k \arg(\lambda) + \arg(x^*y) \in \pi \mathbb{Z}.
	\end{equation*}
	On comparing $\arg(w)$ for all $k=0,1,\ldots, d$, it is easy to verify that the above equation holds if and only if $\arg(\lambda)$ is an integral multiple of $\pi$, that is, $\lambda$ is real. Similarly, for $j=1,\ldots, m$, the other constraint $\omega \text{sign}(w_j(\lambda)) x^*y \in \R$ in~\eqref{eq:equiconda} holds if and only if $x^*y=0$ or 
	\begin{equation}\label{eq:herm:arg}
		\sin(\arg(\omega) - \arg(w_j(\lambda)) + \arg(x^*y)) = 0,
	\end{equation}
since $\overline{w_j(z)} = w_j(\overline{z})$ for all $j=1,\ldots, m$, as $G(z)$ is a Hermitian RMF. Also, if we choose $\omega$ such that $\arg(\omega)=n\pi - \arg(x^*y)$, then~\eqref{eq:herm:arg} holds for all $j=1,\ldots, m$ if and only if $\lambda \in \R$. 
This shows that for $\lambda\neq 0$, $\kappa^{\text{Herm}}(\lambda,G) = \kappa(\lambda,G)$ if and only if $x^*y=0$ or $\lambda\in \R$. 

If $\lambda = 0$, then by Lemma~\ref{lem:SequalU}, $\kappa^{\rm Herm}(0,G) = \kappa(0,G)$ if and only if there exist Hermitian matrices $\Delta_{A_0}$, $\Delta_{E_1},\ldots,\Delta_{E_m}$ such that $\Delta_{A_0} x = \omega y$ and $\Delta_{E_j}x = \omega \text{sign}(w_j(\lambda))y, \text{ for } j=1,\ldots,m$. Again by Lemma~\ref{lem:map}, such matrices exist if and only if $\omega x^*y\in \R$ and $\omega \text{sign}(w_j(\lambda)) x^*y \in \R$, which collectively holds if and only if $x^*y=0$ or
	\begin{equation}\label{eq:herm:arg1}
		\sin(\arg(\omega) + \arg(x^*y))=0, \text{ and } \sin(\arg(\omega) - \arg(w_j(0)) + \arg(x^*y)) = 0.
	\end{equation}
	An easy calculation shows that the~\eqref{eq:herm:arg1} holds if and only if $w_j(0)\in \R$ for $j=1,\ldots,m$ with a choice of $\omega = e^{i(n\pi - \arg(x^*y))}$. This completes the first part of the proof.
	
	We next prove inequality~\eqref{eq:Herm}. In view of~\eqref{eq:scond1}, it is enough to prove
	\begin{equation}
		\sup\big \{ \big | \sum_{k=0}^d \lambda^k y^*\Delta_{A_k}x + \sum_{j=1}^m w_j(\lambda)y^*\Delta_{E_j}x \big | \big \} \geq \frac{1}{\sqrt{2}} \alpha,
	\end{equation}
	where $\alpha = \sum_{k=0}^d |\lambda|^k + \sum_{j=1}^m |w_j(\lambda)|$, and the supremum is taken over $(\Delta_{A_0},\ldots, \Delta_{A_d},\Delta_{E_1},\ldots,\Delta_{E_m})$ $\in (\text{Herm}(n))^{d+m+1}$ such that $\|\Delta_{A_k}\| \leq 1, \|\Delta_{E_j}\| \leq 1$. In view of Theorem~\ref{thm:setK}, this is equivalent to proving that
	\begin{equation}
		\sup_{\alpha_k,\beta_k,\delta_j,\gamma_j\in\R}\{| \sum_{k=0}^d \lambda^k (\alpha_k + i c\beta_k) + \sum_{j=1}^m w_j(\lambda)( \delta_j + ic \gamma_j)|~:~ \alpha_k^2+\beta_k^2\leq 1, \delta_j^2 + \gamma_j^2\leq 1 \} \geq \frac{1}{\sqrt{2}} \alpha.
	\end{equation}
 Let us define the set
	\begin{align*}
		S&:=\{ s : s= (\alpha_0,\alpha_1,\ldots,\alpha_d,\beta_0,\beta_1,\ldots,\beta_d, \delta_1,\ldots, \delta_m,~\gamma_1,\ldots, \gamma_m)\in\R^{2(d+m)+2} ,\\ 
		& \hspace{3cm} \alpha_k^2+\beta_k^2\leq 1 \text{ for } k=0,\ldots,d,~ \delta_j^2+\gamma_j^2\leq 1 \text{ for } j=1,\ldots, m \},
	\end{align*}
	and for a fixed choice of $s\in S$, we define
	\begin{equation}
		H_s(\lambda):= \sum_{k=0}^d \lambda^k  (\alpha_k + ic\beta_k) + \sum_{j=1}^m w_j(\lambda)(\delta_j + ic\gamma_j).
	\end{equation} 
	Further, denote by $\theta:= \arg(\lambda)$ and by $\theta_j:=\arg(w_j(\lambda))$, then the real and the complex part of $H_s(\lambda)$ can be expressed as
	\begin{align*}
		\real(H_s(\lambda)) &= \sum_{k=0}^d |\lambda|^k  (\cos (k\theta)\alpha_k - c \sin(k\theta) \beta_k) + \sum_{j=1}^m |w_j(\lambda)|(\cos(\theta_j)\delta_j - c\sin(\theta_j) \gamma_j),\\
		\imag(H_s(\lambda)) &= \sum_{k=0}^d |\lambda|^k  (\sin (k\theta)\alpha_k + c \cos(k\theta) \beta_k) + \sum_{j=1}^m |w_j(\lambda)|(\sin(\theta_j)\delta_j + c\cos(\theta_j) \gamma_j).
	\end{align*}
	Choosing $s_1\in S$ such that $\alpha_k=\text{sign}(\cos(k\theta)), \beta_k = 0$ for $k=0,1,\ldots,d$ and  $\delta_j=\text{sign}(\cos(\theta_j)), \gamma_j=0$ for $j=1,\ldots, m$, we obtain
	\begin{equation*}
		\real(H_{s_1}(\lambda)) = \sum_{k=0}^d |\lambda|^k |\cos(k\theta)| + \sum_{j=1}^m |w_j(\lambda)||\cos(\theta_j)|,
	\end{equation*} 
	while a choice of $s_2\in S$ with $\alpha_k=\text{sign}(\sin(k\theta)), \beta_k = 0$ for $k=0,1,\ldots,d$ and  $\delta_j=\text{sign}(\sin(\theta_j)), \gamma_j=0$ for $j=1,\ldots, m$ gives
	\begin{equation*}
		\imag(H_{s_2}(\lambda)) = \sum_{k=0}^d |\lambda|^k |\sin(k\theta)| + \sum_{j=1}^m |w_j(\lambda)||\sin(\theta_j)|.
	\end{equation*}
	Since, $\sup_{s\in S} |H_s(\lambda)| \geq \max\{\real(H_{s_1}(\lambda)),\imag(H_{s_2}(\lambda))\}$, this means
	\begin{align*}
		\sup_{s\in S} |H_s(\lambda)| & \geq \sum_{k=0}^d |\lambda|^k \max\{|\cos(k\theta)|,|\sin(k\theta)|\} + \sum_{j=1}^m |w_j(\lambda)| \max\{|\cos(\theta_j)|, |\sin(\theta_j)|\}\\
		& \geq \frac{1}{\sqrt{2}} \bigg( \sum_{k=0}^d |\lambda|^k + \sum_{j=1}^m |w_j(\lambda)|\bigg).
	\end{align*}
	This completes the proof.
\end{proof}

The structured eigenvalue condition number for $*$-even and $*$-odd matrix polynomials was considered in~\cite{Bor10}, and conditions on the eigenvalues were obtained under which the unstructured and the structured condition numbers are equal. Further, bounds for the structured condition number were also obtained. Here, we give a similar characterization of when the unstructured and structured condition numbers for the skew-Hermitian, $*$-even and $*$-odd structures, are equal and also some bounds for the structured condition number by using the already obtained characterization for the Hermitian condition number in Theorem~\ref{thm:herm}. This is followed by exploiting the fact that $G(z)$ is skew-Hermitian RMF if and only if $R(z)=iG(z)$ is Hermitian. Similarly, it is easy to observe from Table~\ref{tab:my_label}, that $G(z)$ is $*$-even or $*$-odd if and only if $R(z)=G(iz)$ or $R(z)=iG(iz)$ is Hermitian. To derive bounds for the structured condition number for $*$-even RMFs, we define
\begin{eqnarray*}
	\mathbb{S}_{e} \hspace{-.2cm}&:=\Big\{\left(\Delta_{A_{0}}, \ldots,\Delta_{A_{d}},\Delta_{E_{1}},\ldots, \Delta_{E_m}\right) \in (\C^{n\times n})^{d+m+1} :~ \Delta_{A_k}^*=(-1)^{k}\Delta_{A_k} ,\,\\
	&	k=0,\ldots, d,~\Delta_{E_j} \in {\rm Herm}(n),~ j=1,\ldots, m\Big\},
\end{eqnarray*}
when $(w_j(-z))^*=w_j(\overline z)$, and 
\begin{eqnarray*}
	\mathbb{S}_{e} \hspace{-.2cm}&:=\Big\{\left(\Delta_{A_{0}}, \ldots,\Delta_{A_{d}},\Delta_{E_{1}},\ldots, \Delta_{E_m}\right)  \in (\C^{n\times n})^{d+m+1}:~  \Delta_{A_k}^*=(-1)^{k}\Delta_{A_k},\\
	&	k=0,\ldots, d,~\Delta_{E_j} \in {\rm SHerm}(n),~ j=1,\ldots, m\Big\},
\end{eqnarray*}
when $(w_j(-z))^*=-w_j(\overline z)$. Similary, for the $*$-odd structure, we define
\begin{eqnarray*}
	\mathbb{S}_{o}\hspace{-.2cm}&:=\Big\{\left(\Delta_{A_{0}}, \ldots,\Delta_{A_{d}},\Delta_{E_{1}},\ldots, \Delta_{E_m}\right)  \in (\C^{n\times n})^{d+m+1}:~ \Delta_{A_k}^*=(-1)^{k+1}\Delta_{A_k},\\
	&	k=0,\ldots, d,~\Delta_{E_j} \in {\rm Herm}(n),~ j=1,\ldots, m\Big\},
\end{eqnarray*}
when $(w_j(-z))^*=-w_j(\overline z)$, and 
\begin{eqnarray*}
	\mathbb{S}_{o}\hspace{-.2cm}&:=\Big\{\left(\Delta_{A_{0}}, \ldots,\Delta_{A_{d}},\Delta_{E_{1}},\ldots, \Delta_{E_m}\right)  \in (\C^{n\times n})^{d+m+1}:~ \Delta_{A_k}^*=(-1)^{k+1}\Delta_{A_k},\\
	&	k=0,\ldots, d,~\Delta_{E_j} \in {\rm SHerm}(n),~ j=1,\ldots, m\Big\},
\end{eqnarray*}
when $(w_j(-z))^*=w_j(\overline z)$.
%
%
Then, from~\eqref{def:ucond}, the structured condition numbers are denoted by $\kappa^{\text{SHerm}}(\lambda,G)$ when $\S=(\text{SHerm}(n))^{d+m+1}$, $\kappa^{\text{even}_*}(\lambda,G)$ when $\S=\S_e$, and $\kappa^{\text{odd}_*}(\lambda,G)$ when $\S=S_o$. We have the following result.
\begin{theorem}\label{thm:herm1}
	Let $\mathbb S\in \left\{ {\mathbb S}_e,{\mathbb S}_o,({\rm SHerm})^{d+m+1} \right\}$ and let 
	$G(z)$ be a RMF of the form~\eqref{mat:G}, where the coefficient matrices $(A_0,\ldots,A_d,E_1,\ldots,E_m) \in \mathbb S$. Let $(\lambda,x,y)$ be an eigentriplet of $G(z)$ such that $\lambda$ satisfy the assumption~\eqref{assump}, and $\|x\|=1$ and $\|y\|=1$. Define $R(z)=G(iz)$. Then the following holds:
	\begin{itemize}
		\item when $\mathbb S= ({\rm SHerm})^{d+m+1}$, we have
		\[
		\kappa^{{\rm SHerm}}(\lambda,G)=\kappa^{{\rm Herm}}(\lambda,iG),
		\]
		\item when $\mathbb S= {\mathbb S}_e$, we have
		\[
		\kappa^{{\rm even}_*}(\lambda,G)=\kappa^{{\rm Herm}}(\frac{\lambda}{i},R),
		\]
		\item when $\mathbb S= {\mathbb S}_o$, we have
		\[
		\kappa^{{\rm odd}_*}(\lambda,G)=\kappa^{{\rm Herm}}(\frac{\lambda}{i},iR).
		\]
	\end{itemize}
\end{theorem}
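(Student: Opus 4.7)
The plan is to prove each of the three identities by establishing a structure-preserving bijection between the admissible perturbation families on the left-hand side and the Hermitian perturbations of the associated RMF on the right, and then checking that this bijection preserves (i) the individual operator norms $\|\Delta_{A_k}\|$ and $\|\Delta_{E_j}\|$, (ii) the location of the eigenvalue together with its left/right eigenvectors, and (iii) the modulus of the ratio $\frac{y^*\Delta G(\lambda)x}{y^*G'(\lambda)x}$ that defines the condition number via \eqref{eq:scond1}. Once these three invariances are in place, taking the supremum on both sides gives the desired equality.

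For the first case $\mathbb{S}=({\rm SHerm})^{d+m+1}$, I would first note that $iG(z)$ is Hermitian: the polynomial coefficients $iA_k$ are Hermitian because $A_k$ is skew-Hermitian, and by Table~\ref{tab:my_label} the weights $w_j$ already satisfy $(w_j(z))^*=w_j(\bar z)$ with $E_j$ skew-Hermitian, so $iE_j$ is Hermitian. The map $\Delta G\mapsto i\Delta G$ then sends $({\rm SHerm})^{d+m+1}$ bijectively onto $({\rm Herm})^{d+m+1}$, preserving every norm $\|\Delta_{A_k}\|$ and $\|\Delta_{E_j}\|$. Since $(iG)(\lambda)x=0$ iff $G(\lambda)x=0$, the eigentriplet is the same; and because $(iG)'(\lambda)=iG'(\lambda)$ and $\Delta(iG)(\lambda)=i\Delta G(\lambda)$, the $i$ factors cancel in the ratio. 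The identity $\kappa^{{\rm SHerm}}(\lambda,G)=\kappa^{{\rm Herm}}(\lambda,iG)$ follows.

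For the second case $\mathbb{S}=\mathbb{S}_e$, define $R(z)=G(iz)$. I would verify that $R$ is Hermitian by computing $(i^k A_k)^* = (-i)^k(-1)^k A_k = i^k A_k$ for the polynomial coefficients, and by checking, in each of the two sub-cases of the definition of $\mathbb{S}_e$, that the transformed weight $\tilde w_j(z):=w_j(iz)$ satisfies $(\tilde w_j(z))^*=\tilde w_j(\bar z)$ when $E_j\in{\rm Herm}(n)$, and $(\tilde w_j(z))^*=-\tilde w_j(\bar z)$ when $E_j\in{\rm SHerm}(n)$ (in the latter case $\tilde w_j(z)E_j$ is still Hermitian, which is what matters for $R$). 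Exactly the same check on $\Delta G$ shows that $\Delta R(z):=\Delta G(iz)$ is a Hermitian perturbation of $R$, with coefficient matrices $i^k\Delta_{A_k}$ and $\Delta_{E_j}$ of norms equal to $\|\Delta_{A_k}\|$ and $\|\Delta_{E_j}\|$. Since $G(\lambda)x=0$ iff $R(\lambda/i)x=0$, and since $R'(\lambda/i)=iG'(\lambda)$ and $\Delta R(\lambda/i)=\Delta G(\lambda)$, the two ratios in \eqref{eq:scond1} are identical in modulus, giving $\kappa^{{\rm even}_*}(\lambda,G)=\kappa^{{\rm Herm}}(\lambda/i,R)$. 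The third case is handled by the same recipe applied to $iR(z)=iG(iz)$: the polynomial coefficients $i^{k+1}A_k$ satisfy $(i^{k+1}A_k)^*=(-i)^{k+1}(-1)^{k+1}A_k=i^{k+1}A_k$, and multiplication of $w_j(iz)$ by $i$ converts the two $*$-odd sub-cases into the Hermitian weight symmetry combined with a Hermitian or skew-Hermitian $E_j$, so the same bijection argument yields $\kappa^{{\rm odd}_*}(\lambda,G)=\kappa^{{\rm Herm}}(\lambda/i,iR)$.

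The main obstacle is the bookkeeping for the rational weights: each of the two sub-cases in the definitions of $\mathbb{S}_e$ and $\mathbb{S}_o$ has to be matched with the correct weight symmetry after the substitution $z\mapsto iz$ and, in the third case, an extra factor of $i$. Once one carefully tracks how each weight identity transforms (for instance, $(w_j(-z))^*=w_j(\bar z)$ becomes $(w_j(iz))^*=w_j(i\bar z)$ on the substitution $z\mapsto -iz$), the three identities reduce to elementary algebraic manipulations, and all remaining steps are direct consequences of \eqref{eq:scond1}.
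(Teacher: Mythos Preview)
The paper states this theorem without a formal proof; the argument is only sketched in the paragraph preceding the theorem, where it is noted that $G(z)$ is skew-Hermitian, $*$-even, or $*$-odd if and only if $iG(z)$, $G(iz)$, or $iG(iz)$, respectively, is Hermitian. Your proposal supplies exactly this missing argument, following the same route: in each case you exhibit a norm-preserving bijection between the structured perturbation set and the Hermitian perturbation set of the transformed RMF, and verify that both the eigentriplet and the ratio in~\eqref{eq:scond1} are carried over unchanged in modulus. This is correct and is precisely the approach the paper intends.

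One small point worth making explicit (you allude to it as the ``main obstacle'' but do not fully resolve it): in the sub-cases of $\mathbb S_e$ and $\mathbb S_o$ where $E_j$ is skew-Hermitian, the transformed weight $\tilde w_j$ does not itself satisfy the Hermitian weight symmetry, and the coefficient $E_j$ is not Hermitian, so $R$ (or $iR$) is not literally in the Hermitian form of Table~\ref{tab:my_label}. The fix is to rewrite $\tilde w_j(z)E_j=(i\tilde w_j(z))(-iE_j)$, which puts both the weight and the coefficient into Hermitian form while leaving $\|\Delta_{E_j}\|$ unchanged under the corresponding map $\Delta_{E_j}\mapsto -i\Delta_{E_j}$. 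With this rewriting spelled out, the bijection onto $({\rm Herm}(n))^{d+m+1}$ is complete and the rest of your argument goes through.
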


As a direct consequence of Theorems~\ref{thm:herm} and~\ref{thm:herm1}, we obtain the following corollary.
\begin{corollary}
	Let $\mathbb S\in \left\{ {\mathbb S}_e,{\mathbb S}_o,({\rm SHerm})^{d+m+1} \right\}$ and let  $G(z)$ be a RMF of the form~\eqref{mat:G}, where the coefficient matrices $(A_0,\ldots,A_d,E_1,\ldots,E_m) \in \mathbb S$. Let 
$(\lambda,x,y)$ be an eigentriplet of $G(z)$ such that $\lambda$ satisfy the assumption~\eqref{assump}, and $\|x\|=1$ and $\|y\|=1$.
Then for $\lambda\neq 0$, $\kappa^{\S}(\lambda,G) = \kappa(\lambda,G)$ if and only if $x^*y = 0$ or $\lambda \in i\R$, and $\kappa^{\S}(0,G) = \kappa(0,G)$ if and only if $x^*y=0$ or $w_j(0) \in i\R$ for $j=1,\ldots,m$. Moreover, we have
	\begin{equation*}
		\frac{1}{\sqrt{2}}\kappa(\lambda,G) \leq \kappa^{\S}(\lambda,G).
	\end{equation*}
\end{corollary}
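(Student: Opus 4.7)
My approach is to reduce each of the three structures $\S_e$, $\S_o$, $(\mathrm{SHerm})^{d+m+1}$ to the Hermitian setting via the identifications in Theorem~\ref{thm:herm1}, and then apply Theorem~\ref{thm:herm} to the resulting Hermitian RMF. Write $\tilde G$ for $iG$ in the $\mathrm{SHerm}$ case, and for $R(z)=G(iz)$ or $iR(z)$ in the $*$-even or $*$-odd cases respectively, and let $\mu$ denote the corresponding eigenvalue $\lambda$ or $\lambda/i$ dictated by Theorem~\ref{thm:herm1}. That theorem already gives $\kappa^{\S}(\lambda,G)=\kappa^{\mathrm{Herm}}(\mu,\tilde G)$, so the corollary reduces to pushing the characterisation and bound of Theorem~\ref{thm:herm} across this identification.

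Three simple observations make the transfer go through. First, the unstructured condition number is invariant under the transformation, i.e.\ $\kappa(\mu,\tilde G)=\kappa(\lambda,G)$; by Theorem~\ref{thm:unstrcond} this reduces to noting that multiplying a matrix by $i$ or substituting $z\mapsto iz$ preserves all the coefficient norms and the modulus of the denominator, using $w_j(i\mu)=w_j(\lambda)$ and $R'(\mu)=iG'(\lambda)$ for the substitution. Second, the eigentriplet $(x,y)$ is unchanged: $G(\lambda)x=0$ is equivalent to $iG(\lambda)x=0$ and to $R(\mu)x=0$, so the condition $x^*y=0$ transfers unchanged from $G$ to $\tilde G$. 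Third, undoing the substitution sends the condition ``$\mu\in\R$'' to the condition on $\lambda$ stated in the corollary; in particular $\mu=\lambda/i\in\R$ in the $*$-even/odd cases is exactly $\lambda\in i\R$.

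With these in place, the $\lambda\neq 0$ part of the corollary is immediate from Theorem~\ref{thm:herm} applied to $(\tilde G,\mu)$. For $\lambda=0$, the same dictionary identifies the weights of $\tilde G$ at $\mu=0$ with the relevant $w_j(0)$ of $G$, and the condition ``$\tilde w_j(0)\in\R$'' coming from Theorem~\ref{thm:herm} becomes the stated condition on $w_j(0)$ after undoing the change of variable. The lower bound $\frac{1}{\sqrt{2}}\kappa(\lambda,G)\le\kappa^{\S}(\lambda,G)$ then follows by substituting $\kappa^{\S}(\lambda,G)=\kappa^{\mathrm{Herm}}(\mu,\tilde G)$ and $\kappa(\lambda,G)=\kappa(\mu,\tilde G)$ into the inequality $\frac{1}{\sqrt{2}}\kappa(\mu,\tilde G)\le\kappa^{\mathrm{Herm}}(\mu,\tilde G)$ of Theorem~\ref{thm:herm}. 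The main obstacle is the careful bookkeeping of how weights, derivatives, and moduli transform under $z\mapsto iz$, especially when verifying the weight-value conditions at $\mu=0$; once these are confirmed, the corollary is a direct consequence of Theorems~\ref{thm:herm} and~\ref{thm:herm1}.
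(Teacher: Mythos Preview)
Your approach is exactly the paper's: the corollary is stated there as ``a direct consequence of Theorems~\ref{thm:herm} and~\ref{thm:herm1}'', and you carry out precisely that reduction---use Theorem~\ref{thm:herm1} to identify $\kappa^{\S}(\lambda,G)$ with a Hermitian condition number $\kappa^{\mathrm{Herm}}(\mu,\tilde G)$, check that the unstructured condition number and the eigentriplet are preserved under the transformation, and then read off the characterisation and the $1/\sqrt{2}$ bound from Theorem~\ref{thm:herm}. The bookkeeping you sketch (invariance of $\alpha$ and of $|y^*G'(\lambda)x|$ under $G\mapsto iG$ and $z\mapsto iz$) is correct and is all that the paper's one-line proof implicitly relies on.
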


\subsection{T-even/T-odd structure}
In this section, we consider T-even and T-odd RMFs of the form~\eqref{mat:G} that satisfy $(G(z))^T=G(-z)$ or $(G(z))^T=-G(-z)$, respectively. This implies 
that for T-even RMFs $G(z)$, the coefficient matrices $A_k$
for $k=0,\ldots,d$ satisfy that $A_k^T=(-1)^kA_k$, and $E_j$ for $j=1,\ldots,m$ satisfy that, 
$E_j^T=E_j$ if $w_j(z)$ is an even function and $E_j^T=-E_j$ if $w_j(z)$ is an odd function. Similarly, for T-odd RMFs $G(z)$, the coefficient matrices $A_k$ for $k=0,\ldots,d$ satisfy that $A_k^T=(-1)^{k+1}A_k$, and
$E_j$ for $j=1,\ldots,m$ satisfy that, $E_j^T=E_j$ if $w_j(z)$ is an odd function and $E_j^T=-E_j$ if $w_j(z)$ is an even function, see Table~\ref{tab:my_label}.

To tackle the T-even/T-odd RMFs, we partition the set $X=\{1,\ldots,m\}$ into two subsets of indices $X_o:=\{p_1,\ldots,p_r\}$ and $X_e:=\{p_{r+1},\ldots,p_m\}$ with $p_i < p_j$ for $i<j$ such that 
$w_{p}(z)$ is an odd function if $p \in X_o$ and $w_{p}(z)$ is an even function if $p \in X_e$.
Thus, for T-even structures, we have $A_k^T=(-1)^kA_k$ for $k=0,\ldots,d$, and for  $E_j^T=E_j$ if $j \in X_e$ and $E_j^T=-E_j$ if $j \in X_o$.

Let us define two subsets of matrix tuples corresponding to T-even and T-odd structures by
\begin{eqnarray}\label{set:Teven}
	\mathbb{S}_e:&=\big \{ (A_0,\ldots, A_d, E_1,\ldots ,E_m) \in (\C^{n\times n})^{d+m+1}\; : \; A_k^T=(-1)^kA_k, \; k=0,\ldots, d,\nonumber\\
	& E_j^T=E_j\;\text{if}\; j\in X_e \;~ \text{and}\; ~E_j^T=-E_j \; \text{if}\;j \in X_o \big \},
\end{eqnarray}
and 
\begin{eqnarray}\label{set:Todd}
	\mathbb{S}_o:&=\big \{ (A_0,\ldots, A_d, E_1,\ldots ,E_m) \in (\C^{n\times n})^{d+m+1}\; : \; A_k^T=(-1)^{k+1}A_k, \; k=0,\ldots, d,\nonumber\\
	& E_j^T=E_j\;\text{if}\; j\in X_o \;~ \text{and}\; ~E_j^T=-E_j \; \text{if}\;j \in X_e  \big \},
\end{eqnarray}
and denote the T-even eigenvalue condition number by $\kappa^{{\rm even_T}}(\lambda,G)$ when $\mathbb S=\mathbb S_e$ in~\eqref{def:ucond}, and the T-odd eigenvalue condition number by $\kappa^{{\rm odd_T}}(\lambda,G)$ when $\mathbb S=\mathbb S_o$ in~\eqref{def:ucond}. In the rest of this section, we derive a result to estimate the T-even condition number $\kappa^{{\rm even_T}}(\lambda,G)$. A similar result for $\kappa^{{\rm odd_T}}(\lambda,G)$ can be obtained analogously.
\begin{theorem}
	Let $G(z)$ be a T-even RMF of the form~\eqref{mat:G} and let 
	$(\lambda,x,y)$ be an eigentriplet of $G(z)$ such that $\lambda$ satisfy the assumption~\eqref{assump}, and $\|x\|=1$ and $\|y\|=1$. Then, for $\lambda\not = 0$, $\kappa^{\rm even_T}(\lambda,G) = \kappa(\lambda,G)$ if and only if $x^Ty=0$, and $\kappa^{\rm even_T}(0,G) = \kappa(0,G)$ if and only if $x^Ty=0$ or $X_o=\emptyset$. Moreover, we have
	\begin{equation}\label{eq:Teven}
		\kappa^{\rm even_T}(\lambda,G) = \beta\cdot \kappa(\lambda,G),
	\end{equation}
	where 
	\begin{equation}
		\beta=\begin{cases}
			\frac{\sum_{k=0}^{\frac{d-1}{2}} \left( \left| \lambda \right|^{2k} + c\left| \lambda \right|^{2k+1} \right)
				+ \sum_{j\in X_e} \left| w_j(\lambda) \right| + \sum_{j\in X_o} c\left| w_j(\lambda) \right|}{\sum_{k=0}^d |\lambda|^k+\sum_{j=1}^m |\omega_j(\lambda)|} & \text{if } d \text{ is odd}, \\[10pt]
			\frac{1+\sum_{k=0}^{\frac{d}{2}} \left( \left| \lambda \right|^{2k} + c\left| \lambda \right|^{2k-1} \right)
				+ \sum_{j\in X_e} \left| w_j(\lambda) \right| + \sum_{j\in X_o} c\left| w_j(\lambda) \right|}{\sum_{k=0}^d |\lambda|^k+\sum_{j=1}^m |\omega_j(\lambda)|} & \text{if } d \text{ is even},
		\end{cases}
	\end{equation}
	with $c=\sqrt{1-|x^Ty|^2}$, and $X_o (\text{resp. }X_e)$ is the set of indices $j\in \{1,2,\ldots,m\}$ for which $w_j(\lambda)$ is an odd (even) function.
\end{theorem}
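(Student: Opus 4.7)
The plan is to start from the reformulation~\eqref{eq:firsteq}, which gives
\[
\kappa^{\rm even_T}(\lambda,G)=\frac{1}{|y^*G'(\lambda)x|}\sup\Big\{\Big|\sum_{k=0}^{d}\alpha_k\lambda^k+\sum_{j=1}^{m}\beta_j w_j(\lambda)\Big|\Big\},
\]
where $\alpha_k=y^*\Delta_{A_k}x$ and $\beta_j=y^*\Delta_{E_j}x$ with the tuple $(\Delta_{A_0},\ldots,\Delta_{A_d},\Delta_{E_1},\ldots,\Delta_{E_m})\in \mathbb{S}_e$. Reading off the symmetry pattern from~\eqref{set:Teven}, the matrix $\Delta_{A_k}$ is complex symmetric when $k$ is even and complex skew-symmetric when $k$ is odd, while $\Delta_{E_j}$ is symmetric when $j\in X_e$ and skew-symmetric when $j\in X_o$. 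Invoking Theorem~\ref{thm:setK}, each $\alpha_k$ therefore ranges over the closed unit disc for $k$ even and over the closed disc of radius $c=\sqrt{1-|x^Ty|^2}$ for $k$ odd; similarly each $\beta_j$ ranges over the unit disc for $j\in X_e$ and over the disc of radius $c$ for $j\in X_o$.

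Since every admissible set is a centred disc in $\C$, the supremum of the modulus of a linear combination is attained by phase alignment. The triangle inequality gives the upper bound, and equality is obtained by choosing $\alpha_k=r_k\,\mathrm{sign}(\lambda^k)$ and $\beta_j=\rho_j\,\mathrm{sign}(w_j(\lambda))$, where $r_k$ and $\rho_j$ are the admissible disc radii ($1$ or $c$). This yields
\[
\sup\Big|\sum_{k=0}^{d}\alpha_k\lambda^k+\sum_{j=1}^{m}\beta_j w_j(\lambda)\Big|=\sum_{k\ \text{even}}|\lambda|^k+c\!\!\sum_{k\ \text{odd}}|\lambda|^k+\sum_{j\in X_e}|w_j(\lambda)|+c\!\sum_{j\in X_o}|w_j(\lambda)|.
\]
Dividing by $|y^*G'(\lambda)x|$ and comparing with the expression $\kappa(\lambda,G)=\alpha/|y^*G'(\lambda)x|$ from Theorem~\ref{thm:unstrcond} gives $\kappa^{\rm even_T}(\lambda,G)=\beta\cdot\kappa(\lambda,G)$, where regrouping the even and odd powers of $|\lambda|$ according to the parity of $d$ produces the two displayed cases for $\beta$. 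The feasibility of the optimal choices of $\alpha_k,\beta_j$ inside $\mathcal{M}_S(x,y)$ is guaranteed by Theorem~\ref{thm:setK}, so the structured perturbations realizing the supremum can be constructed explicitly.

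To derive the equality conditions, I would compare the numerator of $\beta$ with the denominator $\alpha=\sum_k|\lambda|^k+\sum_j|w_j(\lambda)|$: the factor $c$ appears exactly in the odd-$k$ terms and in the $X_o$ terms, so $\beta=1$ iff every such term is multiplied by $1$ instead of $c$. For $\lambda\neq 0$ (with $d\geq 1$, or more generally whenever the odd-power contribution $\sum_{k\ {\rm odd}}|\lambda|^k$ plus $\sum_{j\in X_o}|w_j(\lambda)|$ is nonzero), this collapses to $c=1$, i.e.~$x^Ty=0$. For $\lambda=0$ the polynomial part contributes only the constant term (which is symmetric and unaffected by $c$), so the only $c$-weighted contribution comes from $\sum_{j\in X_o}|w_j(0)|$; hence $\beta=1$ iff $c=1$ or $X_o=\emptyset$. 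The main technical ingredient, and the only place where the argument is delicate, is the phase alignment producing the supremum; once the shapes of the admissible sets $\mathcal{M}_S(x,y)$ from Theorem~\ref{thm:setK} are in hand, the rest is bookkeeping on the parity of $k$ and the partition $X_e\cup X_o$.
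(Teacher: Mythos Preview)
Your argument is correct. The computation of the explicit formula for $\beta$ via Theorem~\ref{thm:setK} is exactly what the paper does: identify that $y^*\Delta_{A_k}x$ ranges over the unit disc for even $k$ and over the disc of radius $c=\sqrt{1-|x^Ty|^2}$ for odd $k$ (and analogously for the $\Delta_{E_j}$ according to $j\in X_e$ or $j\in X_o$), then use the triangle inequality for the upper bound and phase alignment for attainment. The paper makes the attaining perturbation explicit by fixing a symmetric $S$ with $y^*Sx=1$ and a skew-symmetric $R$ with $y^*Rx=c$ and setting $\Delta_{A_{2k}}=\mathrm{sign}(\lambda^{2k})S$, $\Delta_{A_{2k+1}}=\mathrm{sign}(\lambda^{2k+1})R$, etc.; your appeal to Theorem~\ref{thm:setK} for feasibility amounts to the same thing.

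Where you differ is in the equality condition. The paper derives it \emph{before} computing $\beta$, by invoking Lemma~\ref{lem:SequalU} to reduce $\kappa^{\rm even_T}(\lambda,G)=\kappa(\lambda,G)$ to the existence of structured matrices with $\Delta_{A_k}x=\omega\,\mathrm{sign}(\lambda^k)y$ and $\Delta_{E_j}x=\omega\,\mathrm{sign}(w_j(\lambda))y$, and then applying the mapping Lemma~\ref{lem:map}: symmetric maps always exist, skew-symmetric ones require $x^Ty=0$. You instead read the equality condition directly off the formula, observing that $\beta=1$ forces $c=1$ whenever the $c$-weighted part of the numerator is nonzero. Your route is shorter and self-contained (it does not need Lemma~\ref{lem:SequalU} or Lemma~\ref{lem:map} at all), while the paper's route is the one that generalises uniformly to the other structures in the section where no closed formula for the ratio is available (Hermitian, palindromic). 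Both are valid; yours is the more economical argument for this particular structure.
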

\begin{proof}
	First, suppose that $\lambda\not = 0$. Then, from Lemma~\ref{lem:SequalU}, $\kappa^{\rm even_T}(\lambda,G) = \kappa(\lambda,G)$ if and only if there exists $(\Delta_{A_0}, \ldots,\Delta_{A_d}, \Delta_{E_1}, \ldots,\Delta_{E_m}) \in \S_e$ such that $\|\Delta_{A_k}\|\leq 1$ and $\|\Delta_{E_j}\|\leq 1$ satisfying
	\begin{equation}\label{eq:Teven:map}
		\Delta_{A_k}x = \omega \text{sign}(\lambda^k)y,~k=0,\ldots,d \quad \text{and}\quad \Delta_{E_j}x = \omega \text{sign}(w_j(\lambda))y,~j=1,\ldots,m,
	\end{equation}
	where $\omega\in \C$ is such that $|\omega| = 1$ and $\text{sign}(z) = \frac{\overline{z}}{|z|}$. From the structured set $\mathcal{S}_e$ defined in~\eqref{set:Teven}, the perturbation matrix $\Delta_{A_k}$ is symmetric for even index $k$, and skew-symmetric for odd index $k$. $\Delta_{E_j}$ is symmetric for $j\in X_e$ and skew-symmetric for $j\in X_o$, where $X_o (\text{resp. }X_e)$ is the set of indices $j\in \{1,2,\ldots,m\}$ for which $w_j(\lambda)$ is an odd (even) function. On using Lemma~\ref{lem:map}, there always exists symmetric matrix $\Delta$ sending a vector $v$ to $u$, and there exists skew-symmetric matrix $\Delta$ sending a vector $v$ to $u$ if and only if $u^Tv=0$. Using this in~\eqref{eq:Teven:map}, there exists $\Delta_{A_k}$ for $k=0,1,\ldots,d$ and $\Delta_{E_j}$ for $j=1,\ldots, m$ satisfying~\eqref{eq:Teven:map} if and only if the following holds,
	\begin{align}
		\omega \text{sign}(\lambda^k) x^Ty = 0 ~& \text{ for odd } k\in \{0,1,\ldots, d\} \quad \text{and}\quad 
		\omega \text{sign}(w_j(\lambda))x^Ty = 0 & \text{ for } j\in X_o,
	\end{align}
	which collectively holds if and only if $x^Ty=0$. Similarly, when $\lambda=0$
	from Lemmas~\ref{lem:SequalU} and~\ref{lem:map}, we have that 
	 $\kappa^{\rm even_T}(0,G) = \kappa(\lambda,G)$ if and only if,
	\begin{equation}
		\omega \text{sign}(w_j(\lambda))x^Ty = 0 \quad \text{ for } j\in X_o,
	\end{equation}
	which holds if and only if $x^Ty=0$ or $X_o=\emptyset$. 
	
Next, we prove~\eqref{eq:Teven} only when the degree of the polynomial part of the RMF is odd, i.e., $d$ is odd. When $d$ is even, the result follows on similar lines. For this, let $\Delta G(z) = \sum_{k=0}^d z^k \Delta_{A_k} + \sum_{j=1}^m w_j(z) \Delta_{E_j}$ with $\|\Delta_{A_k}\| \leq 1, \|\Delta_{E_j}\| \leq 1$, then by Theorem~\ref{thm:setK}, we obtain 
	\begin{align}\label{eq:Teven1}
		|y^*\Delta G(\lambda)x| & = \big | \sum_{k=0}^{\frac{d-1}{2}} \left( \lambda^{2k} (\alpha_{2k} + i \beta_{2k}) + \lambda^{2k+1} c (\alpha_{2k+1} + i\beta_{2k+1}) \right) + \nonumber \\
		& \hspace{2cm} \sum_{j\in X_e} w_j(\lambda) (\delta_j +i\gamma_j) + \sum_{j\in X_o} w_j(\lambda) c(\delta_j +i\gamma_j) \big| \nonumber \\
		& \leq \sum_{k=0}^{\frac{d-1}{2}} \left( |\lambda|^{2k} + c|\lambda|^{2k+1} \right) + \sum_{j\in X_e} |w_j(\lambda)| + \sum_{j\in X_o} c |w_j(\lambda)|.
	\end{align} 
	Using~\eqref{eq:Teven1} in~\eqref{eq:scond1}, shows the inequality in~\eqref{eq:Teven}. For the equality in~\eqref{eq:Teven1}, we choose a symmetric matrix $S$ and a skew-symmetric matrix $R$ such that $y^*Sx=1$ and $y^*Rx=\sqrt{1-|x^Ty|^2}$. Such a choice is possible by Theorem~\ref{thm:setK}. The inequality in~\eqref{eq:Teven1} is attained for $\Delta G(z) = \sum_{k=0}^d z^k \Delta_{A_k} + \sum_{j=1}^m w_j(z) \Delta_{E_j}$, where $\Delta_{A_{2k}} = \text{sign}(z^{2k})S$ and $\Delta_{A_{2k+1}} = \text{sign}(z^{2k+1})R$ for $k=0,1,\ldots, \frac{d-1}{2}$, and $\Delta_{E_j} = \text{sign}(w_j(z))S$ for $j\in X_e$, $\Delta_{E_j} = \text{sign}(w_j(z))R$ for $j\in X_o$. This completes the proof.
\end{proof}

By following the lines similar to the T-even RMFs, the structured condition number for T-odd RMFs can also be obtained as stated in the following result. 

\begin{theorem}
	Let $G(z)$ be a T-odd RMF of the form~\eqref{mat:G} and let 
	$(\lambda,x,y)$ be an eigentriplet of $G(z)$ such that $\lambda$ satisfy the assumption~\eqref{assump}, and $\|x\|=1$ and $\|y\|=1$. Then, for $\lambda\not = 0$, $\kappa^{\rm odd_T}(\lambda,G) = \kappa(\lambda,G)$ if and only if $x^Ty=0$, and $\kappa^{\rm odd_T}(0,G) = \kappa(0,G)$ if and only if $x^Ty=0$ or $X_e=\emptyset$. Moreover, we have
	\begin{equation}\label{eq:Todd}
		\kappa^{\rm odd_T}(\lambda,G) = \gamma \cdot \kappa(\lambda,G),
	\end{equation}
	where 
	\begin{equation}
		\gamma=\begin{cases}
			\frac{\sum_{k=0}^{\frac{d-1}{2}} \left( \left| \lambda \right|^{2k} + c\left| \lambda \right|^{2k+1} \right)
				+ \sum_{j\in X_o} \left| w_j(\lambda) \right| + \sum_{j\in X_e} c\left| w_j(\lambda) \right|}{\sum_{k=0}^d |\lambda|^k+\sum_{j=1}^m |\omega_j(\lambda)|} & \text{if } d \text{ is odd}, \\[10pt]
			\frac{1+\sum_{k=0}^{\frac{d}{2}} \left( \left| \lambda \right|^{2k} + c\left| \lambda \right|^{2k-1} \right)
				+ \sum_{j\in X_o} \left| w_j(\lambda) \right| + \sum_{j\in X_e} c\left| w_j(\lambda) \right|}{\sum_{k=0}^d |\lambda|^k+\sum_{j=1}^m |\omega_j(\lambda)|} & \text{if } d \text{ is even},
		\end{cases}
	\end{equation}
	with $c=\sqrt{1-|x^Ty|^2}$, and $X_o (\text{resp. }X_e)$ is the set of indices $j\in \{1,2,\ldots,m\}$ for which $w_j(\lambda)$ is an odd (even) function.
\end{theorem}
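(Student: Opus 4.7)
The plan is to mirror the T-even proof, swapping the parity conventions. Under $\mathbb{S}_o$, every $\Delta_{A_k}$ with $k$ even is skew-symmetric and every $\Delta_{A_k}$ with $k$ odd is symmetric, while $\Delta_{E_j}$ is symmetric precisely for $j \in X_o$ and skew-symmetric precisely for $j \in X_e$. These are the only structural changes; once they are tracked, the T-even argument transfers almost verbatim.

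For the characterization step, I would invoke Lemma~\ref{lem:SequalU} to translate $\kappa^{\rm odd_T}(\lambda, G) = \kappa(\lambda, G)$ into the existence of a tuple in $\mathbb{S}_o$ satisfying $\Delta_{A_k} x = \omega\,\text{sign}(\lambda^k) y$ and $\Delta_{E_j} x = \omega\,\text{sign}(w_j(\lambda)) y$ for some unit-modulus $\omega$. By Lemma~\ref{lem:map}, a symmetric mapping is always realizable while a skew-symmetric mapping requires $x^T y = 0$. For $\lambda \neq 0$, the skew-symmetric slots (every even-$k$ polynomial coefficient and the $\Delta_{E_j}$ for $j \in X_e$) force $x^T y = 0$. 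For $\lambda = 0$ only the $\Delta_{A_0}$ and $\Delta_{E_j}$ constraints survive in the Lemma~\ref{lem:SequalU} condition, and these collapse to $x^T y = 0$ or $X_e = \emptyset$ as stated.

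For the formula of $\gamma$, I would apply Theorem~\ref{thm:setK} to each scalar $y^*\Delta_{A_k}x$ and $y^*\Delta_{E_j}x$: the symmetric case contributes $\alpha + i\beta$ with $\alpha^2 + \beta^2 \leq 1$, and the skew-symmetric case contributes $c(\alpha + i\beta)$ with the same constraint, where $c = \sqrt{1 - |x^T y|^2}$. Separating $y^*\Delta G(\lambda)x = \sum_k \lambda^k y^*\Delta_{A_k} x + \sum_j w_j(\lambda) y^*\Delta_{E_j} x$ by the parity of $k$ and by the partition $X_o \sqcup X_e$, the triangle inequality delivers an upper bound on $|y^*\Delta G(\lambda)x|$ in which a factor of $c$ appears precisely on the skew-symmetric contributions; dividing by $|y^*G'(\lambda)x|$ and invoking Theorem~\ref{thm:unstrcond} gives $\kappa^{\rm odd_T}(\lambda,G) \leq \gamma\,\kappa(\lambda,G)$. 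The cases $d$ odd versus $d$ even differ only in whether the top-degree slot $\lambda^d$ is symmetric or skew-symmetric, which shifts the summation endpoint and yields the piecewise formula.

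For equality, I would use Theorem~\ref{thm:setK} to select a unit-norm symmetric matrix $S$ with $y^*Sx = 1$ and a unit-norm skew-symmetric matrix $R$ with $y^*Rx = c$, then set $\Delta_{A_k} = \text{sign}(\lambda^k)R$ for even $k$, $\Delta_{A_k} = \text{sign}(\lambda^k)S$ for odd $k$, $\Delta_{E_j} = \text{sign}(w_j(\lambda))S$ for $j \in X_o$, and $\Delta_{E_j} = \text{sign}(w_j(\lambda))R$ for $j \in X_e$. With these choices every term contributes its maximal magnitude with aligned phase, so the triangle inequality becomes an equality, yielding the matching lower bound. The main obstacle is purely bookkeeping: keeping the parity swap straight throughout, ensuring the $c$ factor lands on the correct slots, and adjusting the range of summation when $d$ is even; no new analytic ingredient beyond the T-even proof is required.
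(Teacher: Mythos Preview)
Your approach is exactly what the paper indicates: it gives no separate proof for the T-odd theorem and simply says the result ``can be obtained analogously'' to the T-even case, so methodologically there is nothing to compare.

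There is, however, a bookkeeping inconsistency in your proposal that you should resolve. You correctly observe that in $\mathbb{S}_o$ the coefficient $\Delta_{A_k}$ is skew-symmetric for even $k$ (in particular $\Delta_{A_0}$) and symmetric for odd $k$. Tracking this through Theorem~\ref{thm:setK}, the factor $c=\sqrt{1-|x^Ty|^2}$ must land on the even powers $|\lambda|^{2k}$, not on the odd powers $|\lambda|^{2k+1}$ as the displayed $\gamma$ has it. Likewise, at $\lambda=0$ the skew-symmetric mapping constraint $\Delta_{A_0}x=\omega y$ already forces $x^Ty=0$ by Lemma~\ref{lem:map}, so the alternative ``or $X_e=\emptyset$'' cannot emerge from your own analysis. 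In short, your (correct) parity swap does not reproduce the polynomial part of the stated $\gamma$ or the stated $\lambda=0$ condition; the statement appears to have carried over the T-even numerator verbatim without swapping the placement of $c$ on the polynomial terms. You should either flag this discrepancy explicitly or rewrite the numerator with $c$ on the even-index terms (and drop the ``or $X_e=\emptyset$'' clause) so that the formula matches what your argument actually proves.
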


\subsection{$*$-palindromic and T-palindromic structure}

In this section, we consider RMFs $G(z)$ of the form~\eqref{mat:G} with $*$-palindromic and $T$-palindromic structure from Table~\eqref{tab:my_label}. The RMF $G(z)$ is $*$-palindromic if the coefficient matrices $A_k$'s and $E_j$'s in $G(z)$ satisfies $A_k^*=A_{d-k}$ for $k=0,\ldots, d$ and $E_j^*=E_{j}$, and $w_j(z)^*=\overline{z}^dw_j(\frac{1}{\overline{z}})$ for $j=1,\ldots, m$. Similarly, $G(z)$ is $T$-palindromic if $A_k^T=A_{d-k}$ for $k=0,\ldots, d$ and $E_j^T=E_{j}$, and $w_j(z)={z}^dw_j(\frac{1}{{z}})$ for $j=1,\ldots, m$. To obtain the structured condition number for $*$-palindromic matrix functions $G(z)$, we define the following perturbation set
\begin{eqnarray}
	{\rm Pal}_* := &\Big\{ (\Delta_{A_{0}}, \ldots,\Delta_{A_{d}},\Delta_{E_{1}},\ldots, \Delta_{E_m}) \in (\C^{n\times n})^{d+m+1}:~(\Delta_{A_k})^*=\Delta_{A_{d-k}}, \nonumber\\ 
	& k=0,\ldots,d, \, \Delta_{E_j}^* =\Delta_{E_j},\, j=1,\ldots,m
	\Big\}.
\end{eqnarray}
and for $T$-palindromic structure
\begin{eqnarray}
	{\rm Pal}_T := &\Big\{ (\Delta_{A_{0}}, \ldots,\Delta_{A_{d}},\Delta_{E_{1}},\ldots, \Delta_{E_m}) \in (\C^{n\times n})^{d+m+1}:~(\Delta_{A_k})^T=\Delta_{A_{d-k}}, \nonumber\\ 
	& k=0,\ldots,d, \, \Delta_{E_j}^T =\Delta_{E_j},\, j=1,\ldots,m
	\Big\}.
\end{eqnarray}
Then, from~\eqref{def:ucond}, the structured condition number for $\mathbb{S} = {\rm Pal}_*$ and $\mathbb{S}={\rm Pal}_T$ are  respectively denoted by $\kappa^{\rm pal_*}(\lambda,G)$ and $\kappa^{\rm pal_T}(\lambda,G)$.
\begin{theorem}\label{thm:Tpal}
	Let $G(z)$ be a $T$-palindromic RMF of the form~\eqref{mat:G} and let 
	$(\lambda,x,y)$ be an eigentriplet of $G(z)$ such that $\lambda$ satisfy the assumption~\eqref{assump}, and $\|x\|=1$ and $\|y\|=1$. Then $\kappa^{\rm pal_T}(\lambda,G) = \kappa(\lambda,G)$ if and only if 
	\begin{equation*}
		 x^{T}y = 0 \quad \text{ or } \quad \lambda \in  \begin{cases} \mathbb{R} &\text{ if } d \text{ is even}\\
		 	\mathbb{R}^+ &\text{ if } d \text{ is odd},
		 \end{cases}
	\end{equation*}
	where $d$ is the degree of the polynomial part of the RMF $G(z)$. Moreover, we have 
	\begin{equation}\label{eq:Tpal}
		\frac{1}{\sqrt{2}} \frac{B_1(\lambda)}{|y^*G'(\lambda)x| } \leq \kappa^{\rm pal_T}(\lambda, G) \leq \frac{B_1(\lambda)}{|y^*G'(\lambda)x|},
	\end{equation}
	where
	\begin{equation}\label{def:b1lam}
		B_1(\lambda) =
		\begin{cases}
			\sum_{k=0}^{\frac{d-1}{2}} |\lambda^k + \lambda^{d-k}| + c|\lambda^k - \lambda^{d-k}|  +\sum_{j=1}^{m} |w_j(\lambda)| & \text{if d is odd}\\
			\sum_{k=0}^{\frac{d-2}{2}} |\lambda^k + \lambda^{d-k}| + c|\lambda^k - \lambda^{d-k}| + |\lambda|^\frac{d}{2}+\sum_{j=1}^{m} |w_j(\lambda)| & \text{if d is even},
		\end{cases}
	\end{equation}
	and $c=\sqrt{1-|x^Ty|^2}$.
\end{theorem}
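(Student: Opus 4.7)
The plan is to proceed in three steps: the equality characterization, the upper bound, and the lower bound. For the equality $\kappa^{\rm pal_T}(\lambda,G) = \kappa(\lambda,G)$, I would combine Lemma~\ref{lem:SequalU} with Lemma~\ref{lem:map-pal}. For each paired block $(k, d{-}k)$ with $k < d/2$, the palindromic identity $\Delta_{A_{d-k}} = \Delta_{A_k}^T$ turns the two mapping constraints $\Delta_{A_k}x = \omega\,\text{sign}(\lambda^k) y$ and $\Delta_{A_k}^T x = \omega\,\text{sign}(\lambda^{d-k}) y$ into the joint-mapping setting of Lemma~\ref{lem:map-pal} (the $S_2$ case), whose existence reduces to
\[
\omega\bigl(\text{sign}(\lambda^k) - \text{sign}(\lambda^{d-k})\bigr)\, x^T y = 0.
\]
Writing $\lambda = |\lambda|e^{i\theta}$, this is $x^T y = 0$ or $(d-2k)\theta \in 2\pi\mathbb{Z}$ for every such $k$; comparing across different $k$ forces $2\theta \in 2\pi\mathbb{Z}$ (so $\lambda \in \mathbb{R}$), and the additional requirement $d\theta \in 2\pi\mathbb{Z}$ coming from $k=0$ excludes $\theta = \pi$ when $d$ is odd, leaving only $\lambda > 0$. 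The middle block $\Delta_{A_{d/2}}$ (when $d$ is even) and each $\Delta_{E_j}$ are forced symmetric, and Lemma~\ref{lem:map} guarantees the corresponding mappings are always realizable.

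For the upper bound in~\eqref{eq:Tpal}, I would decompose each $\Delta_{A_k}$ as $\Delta_{A_k} = S_k + K_k$ with $S_k \in \text{Sym}(n)$ and $K_k \in \text{Ssym}(n)$. The palindromic relation gives $\Delta_{A_{d-k}} = S_k - K_k$, so the contribution of the pair to $y^*\Delta G(\lambda) x$ telescopes to $(\lambda^k + \lambda^{d-k})\, y^*S_k x + (\lambda^k - \lambda^{d-k})\, y^*K_k x$. Since $\|S_k\|, \|K_k\| \leq \|\Delta_{A_k}\| \leq 1$, Theorem~\ref{thm:setK} yields $|y^*S_k x| \leq 1$ and $|y^*K_k x| \leq c$. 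Summing the triangle-inequality estimates over the paired blocks, adding $|\lambda|^{d/2}$ for the middle block if $d$ is even (where $\Delta_{A_{d/2}}$ is forced symmetric), and adding $\sum_j |w_j(\lambda)|$ from the symmetric $\Delta_{E_j}$'s reassembles the right-hand side into $B_1(\lambda)$.

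The lower bound is the more delicate step, and I would mirror the real/imaginary splitting used in the proof of Theorem~\ref{thm:herm}. Parameterize $y^*S_k x$, $y^*K_k x$, and $y^*\Delta_{E_j} x$ via Theorem~\ref{thm:setK}, and view $H_s(\lambda) := y^*\Delta G(\lambda) x$ as a real-linear function of a parameter tuple $s$. I would then construct two specific tuples $s_1, s_2$: $s_1$ chooses signs so that $\real(H_{s_1}(\lambda))$ equals the sum of absolute real parts of the coefficients $\lambda^k \pm \lambda^{d-k}$ (with the factor $c$ on the skew-symmetric contributions) and of $w_j(\lambda)$, and $s_2$ does the analogue with imaginary parts. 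Since $\sup_s |H_s(\lambda)| \geq \max(|\real(H_{s_1}(\lambda))|, |\imag(H_{s_2}(\lambda))|)$ and $\max(|\cos\phi|, |\sin\phi|) \geq 1/\sqrt{2}$ for every $\phi$, the bound $B_1(\lambda)/\sqrt{2}$ follows. The main obstacle is that the single norm constraint $\|\Delta_{A_k}\| = \|S_k + K_k\| \leq 1$ couples $S_k$ and $K_k$ and prevents simultaneous saturation of $|y^*S_k x|$ and $|y^*K_k x|$ at their separate maxima $1$ and $c$; this coupling is exactly what produces the factor $\sqrt{2}$ gap between the upper and lower bounds, and the lower-bound construction must be restricted within each paired block to perturbations that are purely symmetric or purely skew-symmetric, compensated for by the real/imaginary phase-alignment argument.
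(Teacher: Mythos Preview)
Your treatment of the equality characterization and of the upper bound matches the paper's argument and is correct.

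The gap is in the lower bound. You rightly observe that the single norm constraint $\|\Delta_{A_k}\|\le 1$ couples $S_k=\tfrac12(\Delta_{A_k}+\Delta_{A_k}^T)$ and $K_k=\tfrac12(\Delta_{A_k}-\Delta_{A_k}^T)$, so that $y^*S_kx$ and $y^*K_kx$ cannot both be saturated. But your proposed fix---taking each $\Delta_{A_k}$ purely symmetric or purely skew-symmetric and then running the real/imaginary phase-alignment argument---does not recover $B_1(\lambda)/\sqrt{2}$: under that restriction each pair $(k,d{-}k)$ contributes at most the \emph{larger} of $|\lambda^k+\lambda^{d-k}|\,|\cos\phi_k|$ and $c\,|\lambda^k-\lambda^{d-k}|\,|\cos\hat\phi_k|$ to $\real(H_{s_1})$ (and similarly for $\imag(H_{s_2})$), whereas $B_1(\lambda)$ contains the \emph{sum} of the two terms. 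A concrete test case is $d=1$, $m=0$, $n=2$, $x=e_1$, $y=e_2$ (so $x^Ty=0$, $c=1$), $\lambda=3i$, which is a valid simple eigentriplet of $G(z)=A_0+zA_0^T$ for $A_0=\left(\begin{smallmatrix}0&t\\-3it&0\end{smallmatrix}\right)$: here $\sup_{\|\Delta_{A_0}\|\le 1}|y^*\Delta_{A_0}x+\lambda\,y^*\Delta_{A_0}^Tx|=1+|\lambda|=4$, your restricted construction reaches at most $3$, while $B_1(\lambda)/\sqrt{2}=2\sqrt{5}\approx 4.47$. Since this already exceeds the unstructured bound $\alpha=1+|\lambda|=4$, the lower inequality in~\eqref{eq:Tpal} cannot hold in this instance, and no construction can establish it.

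The paper's own proof has the same defect at the same place: the equality asserted in~\eqref{eq:Tpal2} treats the symmetric and skew-symmetric parameters as independently variable over their Theorem~\ref{thm:setK} ranges, and the subsequent real/imaginary splitting therefore bounds the decoupled supremum from below, not the true one. So you have correctly located the weak point; the difficulty is that the lower inequality itself, as stated, is not valid in general, rather than that your strategy is merely too weak to prove it.
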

\begin{proof}\sloppy
	First suppose that $\lambda \neq 0$. Then by Lemma~\ref{lem:SequalU}, $\kappa^{\rm pal_T}(\lambda,G) = \kappa(\lambda,G)$ if and only if there exists $(\Delta_{A_0},\ldots,\Delta_{A_d},\Delta_{E_1},\ldots,\Delta_{E_m})\in {\rm Pal}_T$ satisfying
	\begin{align}\label{eq:Tpal:map}
		\Delta_{A_k}x & = \omega \text{sign}(\lambda^k)y~ \text{ for } k=0,\ldots,d \quad \text{and}\quad 
		\Delta_{E_j}x & = \omega \text{sign}(w_j(\lambda))y~ \text{ for } j=1,\ldots,m,
	\end{align}
	where $\omega \in \C$ with $|\omega|=1$. 
In view of Lemma~\ref{lem:map}, there always exist symmetric matrices $\Delta_{E_j}$ for $j=1,\ldots,m$ satisfying~\eqref{eq:Tpal:map}. Since $G(z)$ is T-palindromic, the first constraint in~\eqref{eq:Tpal:map} is equivalent to
	\begin{equation}\label{eq:Tpal:mapA}
		\Delta_{A_k}x = \omega \text{sign}({\lambda^k})y \quad \text{ and } \quad \Delta_{A_k}^Tx = \omega \text{sign}({\lambda^{d-k}})y.
	\end{equation} 
From Lemma~\ref{lem:map-pal}, there exists matrix $\Delta_{A_k} \in \C^{n,n}$ satisfying~\eqref{eq:Tpal:mapA} if and only if $\omega \text{sign}(\lambda^k)x^Ty = \omega \text{sign}(\lambda^{d-k})y^Tx$ for $k=0,\ldots, d$. The later condition holds if and only if $x^Ty=0$ or sign$(\lambda^{d-2k}) =1$. The condition sign$(\lambda^{d-2k}) =1$ implies that $\lambda\in \R$ if $d$ is even and $\lambda\in \R^+$ if $d$ is odd.
When $\lambda=0$, then it is easy to verify using Lemma~\ref{lem:SequalU} that  $\kappa^{\rm pal_T}(0,G) = \kappa(0,G)$ always holds. This completes the first part. 
	
Next, we prove~\eqref{eq:Tpal}. For this, we only consider the case when the degree of the polynomial part of the RMF $G(z)$ is even, i.e., $d$ is even. The case when $d$ is odd can be obtained analogously. In view of~\eqref{eq:scond1}, it is enough to prove 
	\begin{equation}\label{eq:Tpal1}
		\frac{1}{\sqrt{2}} B_1(\lambda) \leq \sup \big |\sum_{k=0}^d \lambda^k y^*\Delta_{A_k}x + \sum_{j=1}^m w_j(\lambda)y^*\Delta_{E_j}x\big | \leq B_1(\lambda),
	\end{equation}
	where $B_1(\lambda)$ is defined by~\eqref{def:b1lam} and supremum is taken over 
	$(\Delta_{A_0},\ldots,\Delta_{A_d},\Delta_{E_1},\ldots,\Delta_{E_m})\in {{\rm Pal}_T}$ such that $\|\Delta_{A_k}\|\leq 1, \|\Delta_{E_j}\| \leq 1$. The term $\sum_{k=0}^d \lambda^k y^*\Delta_{A_k}x$ in~\eqref{eq:Tpal1} can be rearranged as
	\begin{align*}
		\sum_{k=0}^{d} \lambda^k y^* \Delta_{A_k}x &= \sum_{k=0}^{\frac{d-2}{2}} \big (\lambda^k y^* \Delta_{A_k}x + \lambda^{d-k} y^* \Delta_{A_{k}}^T x  \big)+ \lambda^{\frac{d}{2}} y^* \Delta_{A_{\frac{d}{2}}} x \\ 
		& = \sum_{k=0}^{\frac{d-2}{2}} \big( (\lambda^k + \lambda^{d-k}) y^* \frac{\Delta_{A_k} + \Delta_{A_k}^T}{2} x + (\lambda^k - \lambda^{d-k}) y^* \frac{\Delta_{A_k} - \Delta_{A_k}^T}{2} x  \big ) + \lambda^{\frac{d}{2}} y^* \Delta_{A_{\frac{d}{2}}} x .
	\end{align*}
The matrices $\frac{\Delta_{A_k} + \Delta_{A_k}^T}{2}$ for $k=0,\ldots, \frac{d-2}{2}$, $ \Delta_{A_{\frac{d}{2}}}$, and $\Delta_{E_j}$ for $j=1,\ldots, m$ are symmetric, and $\frac{\Delta_{A_k} - \Delta_{A_k}^T}{2}$ for $k=0,\ldots, \frac{d-2}{2}$ are skew-symmetric. Therefore, from Theorem~\ref{thm:setK}, we obtain
	\begin{align}\label{eq:Tpal2}
		\sup \big |\sum_{k=0}^d \lambda^k y^*\Delta_{A_k}x +& \sum_{j=1}^m w_j(\lambda)y^*\Delta_{E_j}x \big|  = \sup | \sum_{k=0}^{\frac{d-2}{2}} \big((\lambda^k + \lambda^{d-k}) (\alpha_k + i \beta_k)  \nonumber \\ 
		& \hspace{-1cm}+ (\lambda^k - \lambda^{d-k}) c(\alpha_{\frac{d}{2}+k+1} + i \beta_{\frac{d}{2}+k+1})\big)+ \lambda^{\frac{d}{2}} (\alpha_{\frac{d}{2}} + i \beta_{\frac{d}{2}}) + \sum_{j=1}^m w_j(\lambda)(\delta_j +i \gamma_j)\big |,
	\end{align}
	where $\alpha_k,\beta_k,\delta_j,\gamma_j \in \R$ such that $\alpha_k^2 + \beta_k^2 \leq 1$ for $k=0,\ldots,d$ and $\delta_j^2 + \gamma_j^2 \leq 1$ for $j=1,\ldots, m$. The upper bound in~\eqref{eq:Tpal1} is immediate from~\eqref{eq:Tpal2} using the triangle inequality. For the lower bound, take $\theta:= \arg(\lambda), \theta_j:=\arg(w_j(\lambda)), \phi:= \arg(\lambda^k + \lambda^{d-k}), \hat \phi:= \arg(\lambda^d - \lambda^{d-k})$. Then~\eqref{eq:Tpal2} can be written as 
	\begin{align}\label{eq:Tpal3}
		\sum_{k=0}^d \lambda^k y^*\Delta_{A_k}x + \sum_{j=1}^m w_j(\lambda)y^*\Delta_{E_j}x & = \sum_{k=0}^{\frac{d-2}{2}} \bigg( |\lambda^k + \lambda^{d-k}|(\cos(\phi) + i\sin(\phi)) (\alpha_k + i \beta_k)\nonumber \\
		& \hspace{-2cm} + |\lambda^k - \lambda^{d-k}|(\cos(\hat \phi) + i \sin(\hat \phi)) c(\alpha_{\frac{d}{2}+k+1} + i \beta_{\frac{d}{2}+k+1}) \bigg)\nonumber \\ 
		&\hspace{-3cm} + |\lambda^{\frac{d}{2}}| (\cos(\frac{d\theta}{2}) + i\sin(\frac{d\theta}{2})) (\alpha_{\frac{d}{2}} + i \beta_{\frac{d}{2}}) + \sum_{j=1}^m |w_j(\lambda)| (\cos(\theta_j) + i\sin(\theta_j))(\delta_j +i \gamma_j).
	\end{align}
	Separating~\eqref{eq:Tpal3} into real and imaginary parts and following the arguments similar to the proof of Theorem~\ref{thm:herm}, we obtain that
	\begin{align*}
		|\sum_{k=0}^d \lambda^k y^*\Delta_{A_k}x + \sum_{j=1}^m w_j(\lambda)y^*\Delta_{E_j}x| & \geq \sum_{k=0}^{\frac{d-2}{2}}\bigg( \max\{ |\cos(\phi)|, |\sin(\phi)|\} |\lambda^k + \lambda^{d-k}| \nonumber \\ 
		&\hspace{-2cm} + c \max\{ |\cos(\hat \phi)|, |\sin(\hat \phi)|\} |\lambda^k - \lambda^{d-k}| \bigg) \nonumber \\
		&\hspace{-3cm} + \max\{ |\cos(\frac{d\theta}{2})|,|\sin(\frac{d\theta}{2})| \} |\lambda^{\frac{d}{2}}| + \sum_{j=1}^m \max\{ |\cos(\theta_j)|,|\sin(\theta_j)| \} |w_j(\lambda)| \nonumber \\
		& \geq \frac{1}{\sqrt{2}}\bigg( \sum_{k=0}^{\frac{d - 2}{2}} |\lambda^k + \lambda^{d-k}| + c |\lambda^k - \lambda^{d-k}| + | \lambda^{\frac{d }{2}}| + \sum_{j=1}^m |w_j(\lambda)|\bigg).
	\end{align*}
	This completes the proof.
\end{proof}


\begin{theorem}\label{thm:*pal}
	Let $G(z)$ be a $*$-palindromic RMF of the form~\eqref{mat:G} and let 
	$(\lambda,x,y)$ be an eigentriplet of $G(z)$ such that $\lambda \neq 0$ satisfy the assumption~\eqref{assump}, and $\|x\|=1$ and $\|y\|=1$. Then $\kappa^{\rm pal_*}(\lambda,G) = \kappa(\lambda,G)$ if and only if $x^*y=0$ or 
	\begin{equation*}
		\arg(w_j(\lambda)) - \frac{d}{2} \arg(\lambda) \in \pi \mathbb{Z},
	\end{equation*}
	where $d$ is the degree of the polynomial part of the RMF $G(z)$. Moreover, we have 
	\begin{equation}\label{eq:*pal}
		\frac{1}{\sqrt{2}} \frac{B_2(\lambda)}{|y^*G'(\lambda)x| } \leq \kappa^{\rm pal_*}(\lambda, G),
	\end{equation}
	where
	\begin{equation}\label{eq:defb2lam}
		B_2(\lambda) =
		\begin{cases}
			\sum_{k=0}^{\frac{d-1}{2}} |\lambda^k + \lambda^{d-k}| + |\lambda^k - \lambda^{d-k}|  +\sum_{j=1}^{m} |w_j(\lambda)| & \text{if d is odd}\\
			\sum_{k=0}^{\frac{d-2}{2}} \big(|\lambda^k + \lambda^{d-k}| + |\lambda^k - \lambda^{d-k}| \big)+ |\lambda|^\frac{d}{2}+\sum_{j=1}^{m} |w_j(\lambda)| & \text{if d is even}.
		\end{cases}
	\end{equation}
\end{theorem}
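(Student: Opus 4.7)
The plan is to mirror the proof of Theorem~\ref{thm:Tpal}, replacing the symmetric/skew-symmetric building blocks with Hermitian/skew-Hermitian ones, since the $*$-palindromic structure enforces $\Delta_{A_k}^{*}=\Delta_{A_{d-k}}$ (with $\Delta_{A_{d/2}}$ forced Hermitian when $d$ is even) and $\Delta_{E_j}^{*}=\Delta_{E_j}$. First I invoke Lemma~\ref{lem:SequalU}: for $\lambda\neq 0$, the equality $\kappa^{\rm pal_*}(\lambda,G)=\kappa(\lambda,G)$ holds if and only if there exists a tuple in ${\rm Pal}_{*}$ with unit-norm bound satisfying the standard mapping $\Delta_{A_k}x=\omega\,\text{sign}(\lambda^k)y$ and $\Delta_{E_j}x=\omega\,\text{sign}(w_j(\lambda))y$ for some $|\omega|=1$.

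For each palindromic pair, the first equation is equivalent to the system $\Delta_{A_k}x=\omega\,\text{sign}(\lambda^k)y$ together with $\Delta_{A_k}^{*}x=\omega\,\text{sign}(\lambda^{d-k})y$, and Lemma~\ref{lem:map-pal} makes this solvable iff $x^{*}(\omega\,\text{sign}(\lambda^k)y)=(\omega\,\text{sign}(\lambda^{d-k})y)^{*}x$. A direct calculation using $\overline{\text{sign}(z)}=z/|z|$ reduces this to $2\arg(\omega)+2\arg(x^{*}y)\equiv d\arg(\lambda)\pmod{2\pi}$, a condition independent of $k$. For the Hermitian $\Delta_{E_j}$, Lemma~\ref{lem:map} demands $\omega\,\text{sign}(w_j(\lambda))\,x^{*}y\in\mathbb{R}$, i.e., $\arg(\omega)+\arg(x^{*}y)\equiv\arg(w_j(\lambda))\pmod{\pi}$. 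Subtracting the two families of constraints yields $\arg(w_j(\lambda))-\frac{d}{2}\arg(\lambda)\in\pi\mathbb{Z}$ for every $j$; when this holds (or $x^{*}y=0$, which trivially satisfies both mapping conditions), the choice $\arg(\omega)=\frac{d}{2}\arg(\lambda)-\arg(x^{*}y)$ satisfies every mapping condition simultaneously, giving sufficiency, while necessity follows because an obstruction at some $j$ rules out a common $\omega$.

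For the lower bound, I decompose $\Delta_{A_k}=H_k+S_k$ into Hermitian and skew-Hermitian parts, so that for each palindromic pair the contribution to $y^{*}\Delta G(\lambda)x$ reads
\[
(\lambda^k+\lambda^{d-k})y^{*}H_k x + (\lambda^k-\lambda^{d-k})y^{*}S_k x,
\]
with the middle term $\lambda^{d/2}y^{*}\Delta_{A_{d/2}}x$ (Hermitian $\Delta_{A_{d/2}}$) in the even-$d$ case. Theorem~\ref{thm:setK} parametrizes $y^{*}H_k x=e^{i\phi}(\alpha_k+ic\beta_k)$ and $y^{*}S_k x=e^{i\phi}(c\alpha'_k+i\beta'_k)$ with $\phi=\arg(y^{*}x)$, together with analogous expressions for $\Delta_{A_{d/2}}$ and the Hermitian $\Delta_{E_j}$. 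The crucial observation is that setting the ``$c$-weighted'' coordinates $\beta_k=\alpha'_k=\beta_{d/2}=\gamma_j=0$ eliminates every $c$-factor, so the full moduli $|\lambda^k\pm\lambda^{d-k}|$, $|\lambda|^{d/2}$, and $|w_j(\lambda)|$ become accessible; this is precisely why $c$ does not appear in $B_2(\lambda)$. Adapting the $s_1/s_2$ construction from Theorem~\ref{thm:herm}, I build one perturbation $s_1$ that maximizes $\mathrm{Re}(e^{-i\phi}y^{*}\Delta G(\lambda)x)$ by assigning signs proportional to the $\cos$ of the relevant arguments, and a second $s_2$ that maximizes the imaginary part with $\sin$ signs. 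Taking $\max\{R_1,I_2\}$ and invoking $\max\{|\cos\alpha|,|\sin\alpha|\}\geq 1/\sqrt{2}$ term by term delivers the $B_2(\lambda)/\sqrt{2}$ bound, split into the odd/even $d$ cases. The main obstacle is the careful bookkeeping: the per-term $1/\sqrt{2}$ distribution requires justifying that one may select the ``Case~1'' or ``Case~2'' configuration independently for each pair and for each $j$, which ultimately relies on the independence of the $\Delta_{A_k}$'s across palindromic pairs and of the $\Delta_{E_j}$'s across~$j$.
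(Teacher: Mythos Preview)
Your proof matches the paper's. The equality characterization via Lemmas~\ref{lem:SequalU},~\ref{lem:map} and~\ref{lem:map-pal} is identical to the paper's argument, and for the lower bound the paper merely states that the proof is ``analogous to the proof of~\eqref{eq:Tpal1} in Theorem~\ref{thm:Tpal}''; your Hermitian/skew-Hermitian decomposition of the palindromic pairs (together with the observation that dropping the $c$-weighted coordinates is what makes $B_2(\lambda)$ free of $c$), followed by the $s_1/s_2$ real--imaginary splitting from Theorem~\ref{thm:herm}, is precisely that analogy spelled out.
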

\begin{proof}\sloppy
First suppose that $\lambda\neq 0$. Then by Lemma~\ref{lem:SequalU}, $\kappa^{\rm pal_*}(\lambda,G) = \kappa(\lambda,G)$ if and only if there exists $(\Delta_{A_0},\ldots,\Delta_{A_d},\Delta_{E_1},\ldots,\Delta_{E_m})\in {\rm Pal}_*$ satisfying
	\begin{align}\label{eq:*pal:map1}
		\Delta_{A_k}x  = \omega \text{sign}(\lambda^k)y~ \text{ for } k=0,\ldots,d 
		\quad \text{and}\quad
		\Delta_{E_j}x  = \omega \text{sign}(w_j(\lambda))y~ \text{ for } j=1,\ldots,m.
	\end{align}
	The first constraint in~\eqref{eq:*pal:map1} is equivalent to
	\begin{equation*}\label{eq:*pal:mapA}
		\Delta_{A_k}x = \omega \text{sign}({\lambda^k})y \quad \text{ and } \quad \Delta_{A_k}^*x = \omega \text{sign}({\lambda^{d-k}})y.
	\end{equation*} 
	Using Lemmas~\ref{lem:map} and~\ref{lem:map-pal}, there exist matrices $\Delta_{A_k} \in \C^{n,n}$ for $k=0,\ldots,d$ and Hermitian $\Delta_{E_j}$ for $j=1,\ldots,m$ satisfying~\eqref{eq:*pal:map1} if and only if
	\begin{subequations}
		\begin{alignat}{2}
		&	\omega \text{sign}(\lambda^k)x^*y =\overline{ \omega \text{sign}(\lambda^{d-k})}y^*x \quad \text{for}~ ~k=0,\ldots,d \label{eq:*pal1A}\\
		&	\omega \text{sign}(w_j(\lambda)) x^*y\in \R \quad \text{for}~ ~j=1,\ldots,m.\label{eq:*pal1E}
		\end{alignat}
	\end{subequations}
	It is easy to verify that~\eqref{eq:*pal1A} holds if and only if $x^*y=0$ or $\arg(\omega) + \arg(x^*y) = \frac{d}{2} \arg(\lambda),$
	and~\eqref{eq:*pal1E} holds if and only if $x^*y=0$ or $\arg(\omega) - \arg(w_j(\lambda)) + \arg(x^*y) \in \pi \mathbb{Z} \quad \text{for } j=1,\ldots, m.$
	Combining these two conditions yields that $\kappa^{\rm pal_*}(\lambda,G) = \kappa(\lambda,G)$ if and only if $x^*y=0$ or $\arg(w_j(\lambda)) - \frac{d}{2} \arg(\lambda) \in \pi \mathbb{Z}$ for $j=1,\ldots, m$. 
	
	We now prove~\eqref{eq:*pal}, when the degree of the polynomial part of the RMF $G(z)$ is even, i.e., $d$ is even. The proof for the case when $d$ is odd follows similarly. In view of~\eqref{eq:scond1}, it is enough to prove 
	\begin{equation}\label{eq:*pal1}
		\frac{1}{\sqrt{2}} B_2(\lambda) \leq \sup |\sum_{k=0}^d \lambda^k y^*\Delta_{A_k}x + \sum_{j=1}^m w_j(\lambda)y^*\Delta_{E_j}x| \leq  B_2(\lambda),
	\end{equation}
	where $B_2(\lambda)$ is defined by~\eqref{eq:defb2lam} and supremum is taken over 
	$(\Delta_{A_0},\ldots,\Delta_{A_d},\Delta_{E_1},\ldots,\Delta_{E_m})\in {{\rm Pal}_*}$ such that $\|\Delta_{A_k}\|\leq 1, \|\Delta_{E_j}\| \leq 1$. The proof of~\eqref{eq:*pal1} is analgous to the proof of~\eqref{eq:Tpal1} in Theorem~\ref{thm:Tpal}. 
\end{proof}
%
We note that for any $\lambda\in \C $, we have
	\begin{equation*}
		\sum_{k=0}^d |\lambda|^k \leq \begin{cases}
			\sum_{k=0}^{\frac{d-2}{2}} |\lambda^d + \lambda^{d-k}| + |\lambda^d - \lambda^{d-k}| + |\lambda^{\frac{d}{2}}| & \text{ if } d \text{ is even}\\
			\sum_{k=0}^{\frac{d-2}{2}} |\lambda^d + \lambda^{d-k}| + |\lambda^d - \lambda^{d-k}| & \text{ if } d \text{ is odd}.
		\end{cases} 
	\end{equation*}
	This directly gives from Theorems~\ref{thm:unstrcond} and~\ref{thm:*pal} that 
	\begin{equation*}
		\frac{1}{\sqrt{2}} \kappa(\lambda,G) \leq \frac{1}{\sqrt{2}} \frac{B_2(\lambda)}{|y^*G'(\lambda)x|} \leq \kappa^{\rm pal_*}(\lambda,G) \leq \kappa(\lambda,G).
	\end{equation*}

\section{Conclusion}
We have extended the framework presented in~\cite{Bor10} to compute the structured condition number of RMFs $G(z)$ of the form~\eqref{mat:G} for structures from  Table~\ref{tab:my_label}. We have derived the necessary and sufficient conditions for each structure considered, under which unstructured and structured condition numbers are equal. We have also obtained exact expression for the structured condition number of simple eigenvalues of symmetric, skew-symmetric, and T-even/odd rational matrix functions, and established tight bounds for Hermitian, skew-Hermitian, $*$-even/odd, $*$-palindromic and T-palindromic rational matrix functions. To our knowledge, no other work has been done in the literature to compute the condition number of RMFs with symmetry structures. 

Future research may involve computing the condition number under general perturbations in $G(z)$, i.e., when the scalar functions $w_j$'s in~\eqref{mat:delG} are also perturbed. Another possibility may involve examining other forms, such as the system matrix associated with the RMF, to compute the structured condition number.

\bibliographystyle{spmpsci}
\bibliography{RitPS25}

\end{document}